\newtheorem{theo}{Theorem}[section]
\newtheorem{lemm}[theo]{Lemma}
\begin{document}
	\title{\bf Sharp Bounds and Precise Values for the $N_i$-Chromatic Number of Graphs}
	\date{}
	
\author{Yangfan Yu$^{1}$ and Yuefang Sun$^{2,}$\footnote{Corresponding author. Yuefang Sun was supported by Yongjiang Talent Introduction
Programme of Ningbo under Grant No. 2021B-011-G and Zhejiang Provincial Natural Science Foundation of China under Grant No. LY20A010013.} \\
$^{1}$ School of Mathematics and Statistics,
Ningbo University\\
Zhejiang 315211, P. R. China, yyf96312@163.com\\
$^{2}$ School of Mathematics and Statistics,
Ningbo University\\
Zhejiang 315211, P. R. China, sunyuefang@nbu.edu.cn}

\maketitle
	
\begin{abstract}
Let $G$ be a connected undirected graph.~A vertex coloring $f$ of $G$ is an $N_i$-vertex coloring if for each vertex $x$ in $G$, the number of different colors assigned to $N_G(x)$ is at most $i$.~The $N_i$-chromatic number of $G$, denoted by $t_i(G)$, is the maximum number of colors which are used in an $N_i$-vertex coloring of $G$.

In this paper, we provide sharp bounds for $t_i(G)$ of a graph $G$ in terms of its vertex cover number, maximum degree and diameter, respectively. We also determine precise values for $t_i(G)$ in some cases.
\vspace{0.3cm}\\
{\bf Keywords:}  $N_i$-vertex coloring, $N_i$-chromatic number, vertex cover number, maximum degree, diameter
\vspace{0.3cm}\\
{\bf AMS subject classification (2020)}: 05C05, 05C15, 05C75.
\end{abstract}

	\section{Introduction}
	In this paper, we only consider those graphs that are simple. Let $G=(V(G), E(G))$ be a graph.~For $v\in V(G)$, we denote the degree of $v$, the open neighborhood of $v$ and the closed neighborhood of $v$ as $d_G(v)$, $N_G(v)$ and $N_G[v]$, respectively. The maximum degree of $G$ is $\Delta (G)=\max_{v\in V(G)} d_G(v)$. In addition, for $S\subseteq V(G)$, the notations $G[S]$, $N_G(S)=\bigcup_{v\in S} N_G(v)$ and $N_G[S]=N_G(S)\cup S$ are the induced subgraph of $S$ in $G$, the open neighborhood and the closed neighborhood of $S$, respectively.~We define the distance between $u$ and $v$ in $G$ as $d_G(u,v)$, for $u,v\in V(G)$, moreover, the diameter of $G$ is denoted as $diam(G)=\max \{d_G(u,v)\}$.~Let $N^k_G(v)=\{u\in V(G): d_G(u,v)=k\}$, where $v\in V(G)$ and $1\leq k\leq diam(G)$. 
	
	Graph coloring has always been one of the central problems in graph theory.~In recent years, some researchers have introduced a new concept of graph coloring.~Czap \cite{bib:four} presented a type of graph edge-coloring, called the $M_i$-edge-coloring.~If there is an edge-coloring such that edges incident to each $v\in V(G)$ have at most $i$ colors, then this edge-coloring is an {\em $M_i$-edge-coloring} of $G$.~Furthermore, they defined $K_i(G)$ as the maximum number of colors that can be dominated in an $M_i$-edge-coloring of $G$. Czap and Sugerek \cite{bib:five} obtained the values of $K_2(G)$ for cacti and graph joins. In \cite{bib:seven}, Ivanco discussed and obtained some bounds and precise values of $K_2(G)$ on dense graphs.
	
	Akbari, Alipourfard, Jandaghi and Mirtaheri \cite{bib:one} first presented the concept of $N_2$-vertex coloring.~For each $v\in V(G)$, a vertex coloring $f$ is an {\em $N_2$-vertex coloring} of $G$ if $|\left\{f(x):x\in N_G(v)\right\}|\leq 2$.~They defined the {\em $N_2$-chromatic number} which is denoted by $t_2(G)$.~They got the bounds of $t_2(G)$ in terms of the girth, size and maximum degree of $G$, respectively. Furthermore, they obtained $t_2(T)$ for every tree $T$. In \cite{bib:six}, Eniego, Garces and Rosario obtained some tight bounds of $t_2(G)$ in terms of of the maximum degree and diameter of $G$.~Moreover, they characterized those graphs which attain these bounds.
	
	In this paper, we will discuss a more general $N_i$-vertex coloring. For each $v\in V(G)$, a vertex coloring $f$ is called an {\em $N_i$-vertex coloring} of $G$ $(2\leq i\leq n)$ if $|\left\{f(x):x\in N_G(v)\right\}|\leq i$.~The {\em $N_i$-chromatic number}, denoted by $t_i(G)$, is defined as the maximum number of colors which are used in an $N_i$-vertex coloring of $G$.
	We obtain sharp bounds for $t_i(G)$ of a graph $G$ in terms of its vertex cover number (Theorem~\ref{vertexcover}), maximum degree (Theorem~\ref{Theorem 3.6}) and diameter (Theorem~\ref{Theorem 3.9}), respectively. We also determine the precise values for $t_i(G)$ in some cases (Theorems~\ref{Theorem 3.4}, \ref{Theorem 3.5}, \ref{Theorem 2.2} and \ref{Theorem 2.3}).
	
	For convenience, we use positive integers to represent the colors assigned to vertices. For example, $f(v)=1$ means assigning the first color to $v$.

	\section{$N_i$-chromaticity and Three Graph Parameters}
	
	\subsection{Sharp upper bounds in terms of vertex cover number}

	Before proving the following lemma, we give some notations. Let $\psi$ be a color set satisfying $\psi(v)=\{f(x): x\in N_G(v)\}$. For a non-empty set $S$, let $\psi(S)=\{f(x): x\in N_G(S)\}$, where $N_G(S)=\bigcup_{v\in S} N_G(v)$.

	\begin{lemm}
	    Let $f$ be an $N_3$-vertex coloring of a graph $G$ and let $S\subseteq V(G)$ with $|S|\neq \emptyset$. The following assertions hold:
	    \begin{enumerate}[$\left(\rm i\right)$]
	        \item $|\psi(S)|\leq 3|S|$, where $S=\bigcup_{g=1}^t S_g$, each $S_g$ ($1\leq g\leq t$) has size at most 2 and induces a connected component of $G[S]$.
	        \item $|\psi(S)|\leq 2|S|+2t$, where $S=\bigcup_{h=1}^t S_h$, each $S_h$ ($1\leq h\leq t$) has size at least 3 and induces a connected component of $G[S]$.
	        \item $|\psi(S)|\leq 3|S_A|+2|S_B|+2(t-j)$, where $S_A=\bigcup_{g=1}^j S_g$, $S_B=\bigcup_{h=j+1}^{t} S_h$, $|S_g|\leq 2$, $|S_h|\geq 3$, and each of $S_g$ ($1\leq g\leq j$) and $S_h$ ($j+1\leq h\leq t$) induces a connected component of $G[S]$.
	    \end{enumerate}
	\label{Lemma 3.1}
	\end{lemm}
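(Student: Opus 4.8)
The plan is to prove all three parts by reducing to a single per-component estimate and then summing. The starting observation is that the color-set map is subadditive over unions: since $N_G(S)=\bigcup_{v\in S} N_G(v)$, the components of $G[S]$ make $\psi(S)=\bigcup_{C}\psi(C)$, where $C$ ranges over the components, and hence $|\psi(S)|\le \sum_{C}|\psi(C)|$. Part~(i) then follows at once, with no use of the component sizes: each $\psi(v)$ has at most $3$ colors by the $N_3$ condition, so $|\psi(S)|\le\sum_{v\in S}|\psi(v)|\le 3|S|$. The real content is to show that a connected component $C$ with $|C|=m\ge 3$ satisfies the sharper bound $|\psi(C)|\le 2m+2$; granting this, part~(ii) is immediate by summing $2|S_h|+2$ over the $t$ components, and part~(iii) follows by splitting $S$ into its small components $S_A$ (bound $3|S_A|$ from the part~(i) estimate) and its large components $S_B$ (bound $2|S_B|+2(t-j)$ from the part~(ii) estimate) and adding the two.

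For the crux estimate on a connected $C$ with $m\ge 3$ vertices, I would fix a \emph{connected ordering} $v_1,\dots,v_m$ of its vertices, i.e.\ one in which every $v_k$ with $k\ge 2$ is adjacent to some earlier vertex $v_j$ (its parent); such an ordering exists because $C$ is connected. Tracking the running color sets $B_k=\psi(\{v_1,\dots,v_k\})$, the key claim is that each newly added vertex contributes few new colors: one has $|B_1|\le 3$ and $|B_2|\le 6$ trivially, but for every $k\ge 3$ the parent's color $f(v_j)$ already lies in $B_{k-1}$, so $v_k$ adds at most $|\psi(v_k)|-1\le 2$ new colors. Summing the increments then yields $|B_m|\le 3+3+2(m-2)=2m+2$, as desired.

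To verify the key claim that $f(v_j)\in B_{k-1}$ for $k\ge 3$, I would show that the parent $v_j$ itself has a neighbor among $v_1,\dots,v_{k-1}$: if $j\ge 2$, then $v_j$ has its own parent $v_{j'}$ with $j'<j\le k-1$, whence $f(v_j)\in\psi(v_{j'})\subseteq B_{k-1}$; and if $j=1$, then since $k\ge 3$ the vertex $v_2$ is already present and adjacent to $v_1$, so again $f(v_1)\in\psi(v_2)\subseteq B_{k-1}$. This small case distinction, together with lining up the first two increments so that the target constant $2m+2$ comes out exactly, is the one delicate point; everything else is bookkeeping and subadditivity. I expect the main obstacle to be precisely this per-component bound for $m\ge 3$: in particular, being careful that the saving of one color per step only begins at the third vertex is exactly what forces the additive constant $+2$ and what makes the hypothesis $|C|\ge 3$ necessary.
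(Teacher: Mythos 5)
Your proof is correct and follows essentially the same route as the paper: build each connected component vertex-by-vertex along a connected ordering, observe that from the third vertex onward the new vertex contributes at most $|\psi(v_k)|-1\le 2$ new colors, and then sum the per-component bounds ($3|S_g|$ for small components, $2|S_h|+2$ for large ones) using subadditivity of $\psi$ over components. In fact you are slightly more careful than the paper at the one delicate step, since you explicitly verify via the parent argument that the intersection $\psi(v_k)\cap B_{k-1}$ is nonempty, a fact the paper's proof asserts without justification.
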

	\begin{proof}
    Assume that $f$ is an $N_3$-vertex coloring of $G$, $\psi$ is a color set satisfying $\psi(S)=\{f(x): x\in N_G(S)\}$, where $N_G(S)=\bigcup_{v\in S} N_G(v)$ and the induced subgraph $G[S]$ is connected. Let $S=\{w_1, \dots, w_k\}$ ($|S|\neq \emptyset$), and let $W_i=\{w_1, \dots, w_i\}\subseteq S$ ($1\leq i\leq k$), which satisfies that $G[W_i]$ is a connected induced subgraph of $G$. For $i=|W_i|\leq 2$, we have
     $|\psi(W_1)|=|\psi(w_1)|\leq 3$ and 
    \begin{equation*}
    \begin{aligned}
     |\psi(W_2)| &=|\psi(W_1)\cup \psi(w_2)|=|\psi(W_1)|+|\psi(w_2)|-|\psi(W_1)\cap \psi(w_2)|\\
              &\leq 3+3-0=6.
    \end{aligned}
    \end{equation*}
    For $i=|W_i|\geq 3$, we have 
      \begin{equation*}
    \begin{aligned}
        |\psi(W_i)| &=|\psi(W_{i-1})|+|\psi(w_i)|-|\psi(W_{i-1})\cap \psi(w_i)| \\
                 &\leq |\psi(W_{i-1})|+3-1=|\psi(W_{i-1})|+2.
    \end{aligned}
    \end{equation*}
    Obviously, $|\psi(W_3)| =|\psi(W_2)\cup \psi(w_3)|\leq 8= |\psi(W_2)|+2$,
     and by induction we can obtain
    \begin{equation*}
    \begin{aligned}
        |\psi(W_i)| &\leq |\psi(W_{i-1})|+2\leq |\psi(W_{i-2})|+2+2\leq \dots \leq |\psi(W_2)|+2(i-2)  \\
                 &\leq 6+2(i-2)=2i+2=2|W_i|+2.
    \end{aligned}
    \end{equation*}
    Thus, $|\psi(S)|=|\psi(W_k)|\leq 3|S|$ ($|S|=k\leq 2$), and 
 $|\psi(S)|=|\psi(W_k)|\leq 2|W_k|+2=2|S|+2$ ($|S|=k\geq 3$).

    Now assume that $G[S]$ is disconnected and has $t$ connected components, say $G[S_1], \dots, G[S_t]$. We will focus on the following three cases.
    
    {\em Case 1}: each of $G[S_1]$, $\dots$, $G[S_t]$ has $|S_g|\leq 2$ ($1\leq g\leq t$). In this case, we have $|\psi(S)|=|\psi(\bigcup_{g=1}^t S_g)|\leq \sum_{g=1}^t |\psi(S_g)|\leq \sum_{g=1}^t 3|S_g|=3|S|$.
    
    {\em Case 2}: each of $G[S_1]$, $\dots$, $G[S_t]$ has $|S_h|\geq 3$ ($1\leq h\leq t$). In such a case, we have $|\psi(S)|=|\psi(\bigcup_{h=1}^t S_h)|\leq \sum_{h=1}^t |\psi(S_h)|\leq \sum_{h=1}^t (2|S_h|+2)=2|S|+2t$.
    
    {\em Case 3}: each of $G[S_1]$, $\dots$, $G[S_j]$ has $|S_g|\leq 2$ ($1\leq g\leq j$), and each of the remaining $G[S_{j+1}]$, $\dots$, and $G[S_t]$ has $|S_h|\geq 3$ ($j+1\leq h\leq t$). Let $S_A=\bigcup_{g=1}^j S_g$ and $S_B=\bigcup_{h=j+1}^{t} S_h$. According to the argument of Cases 1 and 2, we have 
    $|\psi(S_A)|=|\psi(\bigcup_{g=1}^j S_g)|\leq 3|S_A|$ and  $|\psi(S_B)|=|\psi(\bigcup_{h=j+1}^{t} S_h)|\leq 2|S_B|+2(t-j)$. Hence, $|\psi(S)|=|\psi(S_A)\cup \psi(S_B)|\leq 3|S_A|+2|S_B|+2(t-j)$.
	\end{proof}
	
	In fact, with a similar proof to that of Lemma~\ref{Lemma 3.1}, we can derive a more general result.
	\begin{theo}
	    Let $f$ be an $N_i$-vertex coloring of a graph $G$, let $S\subseteq V(G)$ with $|S|\neq \emptyset$, and let $\psi(S)=\{f(x): x\in N_G(S)\}$, where $N_G(S)=\bigcup_{v\in S} N_G(v)$. Then the following assertions hold:
	    \begin{enumerate}[$\left(\rm i\right)$]
	        \item $|\psi(S)|\leq i|S|$, where $S=\bigcup_{g=1}^t S_g$, and each $S_g$ ($1\leq g\leq t$) has size at most 2 and induces a connected component of $G[S]$.
	        \item $|\psi(S)|\leq (i-1)|S|+(i-1)t$, where $S=\bigcup_{h=1}^t S_h$, and each $S_h$ ($1\leq h\leq t$) has size at least 3 and induces a connected component of $G[S]$.
	        \item $|\psi(S)|\leq i|S_A|+(i-1)|S_B|+(i-1)(t-j)$, where $S_A=\bigcup_{g=1}^j S_g$, $S_B=\bigcup_{h=j+1}^{t} S_h$, $|S_g|\leq 2$, $|S_h|\geq 3$, and each of $S_g$ ($1\leq g\leq j$) and $S_h$ ($j+1\leq h\leq t$) induces a connected component of $G[S]$.
	    \end{enumerate}
	\label{Theorem 3.2}
		\end{theo}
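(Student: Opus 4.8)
The plan is to mimic the inductive argument of Lemma~\ref{Lemma 3.1}, simply replacing the constant $3$ (the maximum number of colors an $N_3$-coloring can place on a single neighborhood) by $i$ throughout. The first step is a reduction: since $N_G(\bigcup_g S_g)=\bigcup_g N_G(S_g)$, the color set satisfies $\psi(S)=\bigcup_g \psi(S_g)$, so $|\psi(S)|\le\sum_g|\psi(S_g)|$. Each $S_g$ (resp.\ $S_h$) induces a connected component of $G[S]$, so it suffices to prove the two per-component estimates $|\psi(S')|\le i|S'|$ when $|S'|\le 2$ and $|\psi(S')|\le (i-1)|S'|+2$ when $|S'|\ge 3$, for a connected induced subgraph $G[S']$; assertions (i)--(iii) then follow by summing over the appropriate components.

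For a single connected $G[S']$ with $S'=\{w_1,\dots,w_k\}$ I would fix a BFS/DFS ordering so that every prefix $W_m=\{w_1,\dots,w_m\}$ induces a connected subgraph. The base cases are $|\psi(W_1)|=|\psi(w_1)|\le i$ and $|\psi(W_2)|\le|\psi(W_1)|+|\psi(w_2)|\le 2i$. For $m\ge 3$ I would expand $|\psi(W_m)|=|\psi(W_{m-1})|+|\psi(w_m)|-|\psi(W_{m-1})\cap\psi(w_m)|$ and combine $|\psi(w_m)|\le i$ with the key inequality $|\psi(W_{m-1})\cap\psi(w_m)|\ge 1$, obtaining $|\psi(W_m)|\le|\psi(W_{m-1})|+(i-1)$. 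Telescoping from $|\psi(W_2)|\le 2i$ then yields $|\psi(W_k)|\le 2i+(i-1)(k-2)=(i-1)k+2$, the claimed bound for components of size at least $3$; the size-$\le 2$ bound $i|S'|$ is immediate from the two base cases.

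The step I expect to be the crux is justifying $|\psi(W_{m-1})\cap\psi(w_m)|\ge 1$ for $m\ge 3$, which is precisely where prefix-connectivity is needed. Because $G[W_m]$ is connected, $w_m$ has a neighbor $w_a\in W_{m-1}$, whence $f(w_a)\in\psi(w_m)$; because $G[W_{m-1}]$ is connected with $|W_{m-1}|\ge 2$, the vertex $w_a$ also has a neighbor inside $W_{m-1}$, so $w_a\in N_G(W_{m-1})$ and $f(w_a)\in\psi(W_{m-1})$. Thus $f(w_a)$ lies in both sets and the intersection is nonempty. This argument requires $|W_{m-1}|\ge 2$, which is exactly why the sharpening begins only at $m\ge 3$ and why the two-vertex base keeps the weaker value $2i$. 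Assembling the pieces, summation over the size-$\le 2$ components gives $i|S|$ in (i), summation of $(i-1)|S_h|+2$ over the $t$ large components gives $(i-1)|S|+2t$, and the mixed sum gives $i|S_A|+(i-1)|S_B|+2(t-j)$; since $2\le i-1$ for $i\ge 3$, these are dominated by the stated bounds $(i-1)|S|+(i-1)t$ and $i|S_A|+(i-1)|S_B|+(i-1)(t-j)$, so the theorem follows (the additive constant $2$ per large component being the sharp form produced by the telescoped estimate).
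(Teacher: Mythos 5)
Your proof is correct for $i\ge 3$ and follows exactly the route the paper intends: the paper gives no separate proof of this theorem (it only says the result follows ``with a similar proof to that of Lemma~\ref{Lemma 3.1}''), and your argument is that proof carried out, with the added value that you actually justify the key step $|\psi(W_{m-1})\cap\psi(w_m)|\ge 1$ via a neighbor $w_a\in W_{m-1}$ of $w_m$ which itself has a neighbor in $W_{m-1}$ --- a point Lemma~\ref{Lemma 3.1} asserts without proof.

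One caveat deserves emphasis. Your closing reduction, that the telescoped bounds $(i-1)|S|+2t$ and $i|S_A|+(i-1)|S_B|+2(t-j)$ are dominated by the stated ones because $2\le i-1$, genuinely requires $i\ge 3$, whereas the paper defines $N_i$-colorings for all $i\ge 2$ and the theorem carries no such restriction. This is not a defect of your argument but of the statement: for $i=2$ assertion (ii) is false. Take the tree on $\{w_1,w_2,w_3,a,b,c\}$ with edges $w_1w_2$, $w_2w_3$, $w_1a$, $w_2b$, $w_3c$, and color $f(a)=1$, $f(w_2)=2$, $f(w_1)=f(b)=3$, $f(w_3)=4$, $f(c)=5$. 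Every open neighborhood sees at most two colors, so $f$ is an $N_2$-coloring; yet for the connected set $S=\{w_1,w_2,w_3\}$ (so $t=1$) one has $N_G(S)=V(G)$ and $|\psi(S)|=5$, exceeding the claimed $(i-1)|S|+(i-1)t=4$. Your intermediate bound $(i-1)|S|+2t$ gives $5$ and is tight here, so it is the correct statement for all $i\ge 2$; the paper's form is the weaker consequence valid only once $i\ge 3$. In short, your proof establishes the theorem precisely on the range where it is true, and it would be worth making the hypothesis $i\ge 3$ (or the replacement of $(i-1)t$ by $2t$) explicit.
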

		
A {\em vertex cover} of $G$ is a set $S \subseteq V(G)$ which satisfies that every edge has at least one end-vertex in $S$. Moreover, $S$ is a {\em connected vertex cover}, if $G[S]$ is connected. Let $\alpha(G)=\min\{|S^1|, \dots, |S^c|\}$, where $S^1$, $\dots$, and $S^c$ are all connected vertex covers in $G$.
\begin{theo}\label{vertexcover}
	If a graph $G$ is connected, then 
	\begin{equation*}
		t_3(G)\leq \left\{
		\begin{array}{ll}
			1+3\alpha(G), & if~\alpha(G)=1 \\
			3\alpha(G), & if~\alpha(G)=2 \\
			2\alpha(G)+2, & if~\alpha(G)\geq 3
		\end{array}
		\right.
	\end{equation*}
	Moreover, these bounds are sharp.
	\label{Theorem 3.3}
\end{theo}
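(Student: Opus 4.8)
The plan is to exploit a minimum connected vertex cover $S$ of $G$ (so $|S|=\alpha(G)$ and $G[S]$ is connected) and to reduce the count of all colors used to the single quantity $|\psi(S)|$, which Lemma~\ref{Lemma 3.1} already bounds. The crucial structural observation is that when $\alpha(G)\geq 2$ one has $N_G(S)=V(G)$. Indeed, since $S$ is a vertex cover, $V(G)\setminus S$ is independent, so every vertex outside $S$ has all of its neighbors in $S$ and, by connectivity of $G$, at least one such neighbor, placing it in $N_G(S)$; and since $G[S]$ is connected with $|S|\geq 2$, every vertex of $S$ also has a neighbor in $S$ and hence lies in $N_G(S)$. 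Consequently, for every $N_3$-vertex coloring $f$ we get $\psi(S)=\{f(x):x\in N_G(S)\}=f(V(G))$, so the number of colors used equals $|\psi(S)|$, and therefore $t_3(G)=\max_f |\psi(S)|$.

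Feeding this into Lemma~\ref{Lemma 3.1} with $G[S]$ viewed as a single connected component ($t=1$) then settles the bounds. When $\alpha(G)=2$, part (i) with $|S|\leq 2$ yields $|\psi(S)|\leq 3|S|=3\alpha(G)$; when $\alpha(G)\geq 3$, part (ii) with $|S|\geq 3$ yields $|\psi(S)|\leq 2|S|+2=2\alpha(G)+2$. The case $\alpha(G)=1$ must be handled separately, precisely because $N_G(S)$ then omits the single cover vertex $v$: here $G$ is the star $K_{1,n-1}$ with center $v$, the $N_3$-constraint at $v$ forces at most three colors on the leaves, and $v$ contributes at most one more, giving $t_3(G)\leq 4=1+3\alpha(G)$.

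For sharpness I would exhibit one extremal graph per case. For $\alpha=1$, the star $K_{1,3}$, with its center colored distinctly from the three colors on its leaves, attains $4$. For $\alpha=2$, a double star consisting of two adjacent vertices $u,v$ each carrying two leaves admits a rainbow coloring using $6=3\alpha$ colors (one checks $\psi(u)$ and $\psi(v)$ each have size exactly $3$), while it clearly has no connected cover of size $1$, so $\alpha=2$. For $\alpha=k\geq 3$, I would take the path $v_1 v_2\cdots v_k$, attach two pendant leaves to each of $v_1$ and $v_k$ and a single pendant leaf to every interior $v_j$; assigning all $k$ path vertices and all $k+2$ leaves pairwise distinct colors is a valid $N_3$-coloring using $2k+2$ colors, since each interior $v_j$ sees exactly its two path-neighbors and its one leaf, while $v_1,v_k$ each see one path-neighbor and two leaves.

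The main obstacle I anticipate is not the upper bound, which follows almost immediately once $N_G(S)=V(G)$ is noted, but the verification attached to the $\alpha\geq 3$ construction: one must confirm simultaneously that the proposed coloring genuinely meets the $N_3$ condition at every vertex and that the connected vertex cover number is exactly $k$. The latter follows because the pendant edge at each $v_j$ is a ``private'' edge, forcing any vertex cover to contain at least one vertex per $v_j$ and hence to have size at least $k$, with $\{v_1,\dots,v_k\}$ realizing equality as a connected cover.
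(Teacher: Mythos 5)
Your proposal is correct and takes essentially the same approach as the paper's proof: pick a minimum connected vertex cover $S$, observe that $N_G(S)=V(G)$ when $\alpha(G)\geq 2$ (treating $\alpha(G)=1$, the star case, separately), and apply Lemma~\ref{Lemma 3.1} to obtain the three bounds, then exhibit caterpillar-type extremal graphs. Your sharpness examples (rainbow colorings on caterpillars whose spine vertices have degree exactly $3$, with the pendant edges forming a matching that forces the cover number to equal $k$) are minor, slightly cleaner variants of the paper's constructions.
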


\begin{proof}
	Let $S$ be a connected vertex cover of $G$ with $|S|=\alpha(G)$, and let $\psi$ be a color set satisfying $\psi(S)=\{f(x): x\in N_G(S)\}$, where $N_G(S)=\bigcup _{v\in S} N_G(v)$. The argument can be divided into the following three cases.
	
	{\em Case 1}: $|S|=\alpha(G)=1$. Let $S=\{w_1\}$. Each vertex of $G\backslash w_1$ is adjacent to $w_1$, i.e., $d_G(w_1)=|V(G)|-1$. By Lemma~\ref{Lemma 3.1}, $\psi(w_1)\leq 3|w_1|$, we can color $w_1$ with a new color that is distinct with the colors of $N_G(w_1)$, it implies that $|\psi(S)|+1=|\psi(w_1)|+1=t_3(G)$. Hence, by Lemma~\ref{Lemma 3.1} and $|S|=1$, we have $t_3(G)=|\psi(S)|+1\leq 3|S|+1=1+3\alpha(G)$.
	
	{\em Case 2}: $|S|=\alpha(G)=2$. Let $S=\{w_1, w_2\}$. Each vertex of $G$ is adjacent to $w_1$ or $w_2$, i.e., $V(G)=N_G(w_1)\cup N_G(w_2)=N_G(S)$. Then we have $|\psi(S)|=t_3(G)$. By Lemma~\ref{Lemma 3.1} and $|S|=2$, we have $t_3(G)=|\psi(S)|\leq 3|S|=3\alpha(G)$.
	
	{\em Case 3}: $|S|=\alpha(G)\geq 3$. Let $S=\{w_1,\dots,w_k\}$ ($k\geq 3$). Each vertex of $G$ is adjacent to a vertex in $S$, i.e., $V(G)=N_G(S)$. Then we have $|\psi(S)|=t_3(G)$. By Lemma~\ref{Lemma 3.1} and $|S|\geq 3$, we have $t_3(G)=|\psi(S)|\leq 2|S|+2=2\alpha(G)+2$.
	
	To prove the sharpness of these upper bounds, we focus on some connected graphs as follows: Let $G$ be a connected graph, and $S\subseteq V(G)$ with $|S|=\alpha(G) \neq \emptyset$.
	
	If $\alpha(G)=1$, $S=\{w_1\}$, and $d_G(w_1)\geq 3$, then we can color $N_G(w_1)$ with 3 colors and color $w_1$ with a new color. Obviously, we have $t_3(G)=4=1+3\alpha(G)$, when $\alpha(G)=1$. For example, let $V(G)=\{w_1, y_1, y_2, y_3, y_4\}$ and $E(G)=\{w_1y_1, w_1y_2, w_1y_3, w_1y_4\}$. Observe that $\{w_1\}$ is a minimum vertex cover in $G$. As mentioned above, we can assign 3 colors to $y_1$, $y_2$, $y_3$, $y_4$, randomly, and color $w_1$ with a new color.
	
	If $\alpha(G)=2$, $S=\{w_1, w_2\}$, and $d_G(w_k)\geq 3$ for $k\in \{1, 2\}$, then we can color $N_G(w_1)$ with 3 colors and $N_G(w_2)$ with 3 new colors (the color of $w_1$ is the same as one of the colors of $N_G(w_2)$, and the color of $w_2$ is the same as one of the colors of $N_G(w_1)$). Hence, we have $t_3(G)=6=3\alpha(G)$, when $\alpha(G)=2$. For example, let $V(G)=\{w_1, w_2, y_1, y_2, y_3, y_4, y_5, y_6\}$ and $E(G)=\{w_1w_2, w_1y_1, w_1y_2, w_1y_3, w_2y_4, w_2y_5, w_2y_6\}$. Observe that $\{w_1, w_2\}$ is a minimum vertex cover in $G$. As mentioned above, we can assign 3 colors to $w_2$, $y_1$, $y_2$, $y_3$, randomly, and assign 3 new colors to $w_1, y_4$, $y_5$, $y_6$, randomly.
	
    If $\alpha(G)=3$, $S=\{w_1, w_2, w_3\}$, and $d_G(w_k)\geq 3$ for $k\in \{1, 2, 3\}$, then we color $w_2$ with the first color, $N_G(w_2)$ with 3 new colors ($w_1, w_3\in N_G(w_2)$), $N_G(w_1)\backslash w_2$ with 2 colors that are distinct with the colors of $N_G[w_2]$, and $N_G(w_3)\backslash w_2$ with 2 colors that are distinct with the colors of $N_G[w_1\cup w_2]$. Hence, we have $t_3(G)=8=2\alpha(G)+2$, when $\alpha(G)=3$. For example, let $V(G)=\{w_1, w_2, w_3, y_1, y_2, y_3, y_4, y_5, y_6, y_7, y_8\}$ and $E(G)=\{w_1w_2, w_2w_3, w_1y_1, w_1y_2, w_1y_3, w_2y_4, w_2y_5, w_3y_6, w_3y_7, w_3y_8\}$. Observe that $\{w_1, w_2, w_3\}$ is 
   a minimum vertex cover in $G$. As mentioned above, we can color $w_2$ with the first color, assign 3 new colors to $w_1, y_4$, $y_5$, $w_3$, randomly, assign 2 new  colors (these two colors are distinct from the four colors used above) to $y_1, y_2$, $y_3$, randomly, and assign 2 new colors (these two colors are distinct with the six colors used above) to $y_6, y_7, y_8$, randomly.
    
    Similarly, if $|S|>3$ and the degree of each vertex in $S$ is not less than 3, we have $t_3(G)=2\alpha(G)+2$ when $\alpha(G)>3$. Now we define a graph to illustrate this case, let $V(G)=\{w_1, w_2, w_3, w_4, y_1, y_2, \dots, y_9\}$ and $E(G)=\{w_1w_2, w_2w_3, w_3w_4, w_1y_1, w_1y_2, w_1y_3, w_2y_4, w_2y_5, w_3y_6, w_3y_7, w_4y_8, w_4y_9\}$. Note that $\{w_1, w_2, w_3, w_4\}$ is a minimum vertex cover in $G$. And we can assign 10 ($10=2|\{w_1, w_2, w_3, w_4\}|+2$) colors to $V(G)$. Above all, we have $t_3(G)=2\alpha(G)+2$, when $\alpha(G)\geq 3$.
\end{proof}

	\subsection{$N_i$-chromaticity and maximum degree}
	
	We now consider the $N_i$-chromaticity and maximum degree.
	In the following theorem, we get the value for $t_i(G)$ under the condition that $\Delta(G) =n-1$.

	\begin{theo}
	For a connected graph $G$ with order $n\geq i+2$.~If $\Delta(G) =n-1$, then $t_i(G)=i+1$. Moreover, in any $N_i$-vertex coloring of $G$ with $i+1$ colors, one of the following assertions hold:
	\begin{enumerate}[$\left(\rm i\right)$]
	\item $G[N_G(v)]$ is disconnected.
	\item $G[N_G(v)]$ is connected, there are $i$ non-empty disjoint subsets $V_1, \dots, V_i$ of $N_G(v)$ such that $d_G(x)\leq n-1-\min\{|V_1|, \dots, |V_i|\}$, for each $x\in N_G(v)$.
	\end{enumerate}
	\label{Theorem 3.4}
	\end{theo}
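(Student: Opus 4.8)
The plan is to fix, once and for all, a vertex $v$ with $d_G(v)=\Delta(G)=n-1$, so that $N_G(v)=V(G)\setminus\{v\}$, and then split the statement into the value $t_i(G)=i+1$ and the structural dichotomy for colorings attaining it. For the upper bound I would argue straight from the definition: in any $N_i$-vertex coloring $f$ the set $N_G(v)$ carries at most $i$ colors, and the only vertex outside $N_G(v)$ is $v$ itself, so $f$ uses at most $i+1$ colors in total; hence $t_i(G)\le i+1$. This step is immediate and uses only that $v$ is universal.

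For the reverse inequality I would exhibit a coloring attaining $i+1$. Assign $v$ a private color $i+1$ and color $N_G(v)$ with the palette $\{1,\dots,i\}$, using each of these $i$ colors at least once, so that $|\psi(v)|=i$. The key constraint is that every $x\in N_G(v)$ already sees color $i+1$ through the edge $xv$, so its remaining neighbors, namely $N_G(x)\cap N_G(v)$, may display at most $i-1$ colors if $|\psi(x)|\le i$ is to hold. The construction therefore reduces to partitioning $N_G(v)$ into nonempty color classes so that each vertex is adjacent to at most $i-1$ of them; I would build this by cases according to whether $G[N_G(v)]$ is connected, exploiting either a disconnection of $G[N_G(v)]$ or a vertex of low degree to place a whole color class in the non-neighborhood of each $x$.

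The substantive part is the \emph{moreover}. Starting from an arbitrary $N_i$-vertex coloring $f$ with exactly $i+1$ colors, I would first show the coloring is forced into a rigid shape: since $|\psi(v)|\le i$ and the total is $i+1$, the vertex $v$ must carry the unique extra color and $N_G(v)$ must realize exactly $i$ colors, giving nonempty classes $V_1,\dots,V_i$. Next, for each $x\in N_G(v)$ the bound $|\psi(x)|\le i$ together with $f(v)\notin\psi(v)$ forces $x$ to miss some color $j$ on $N_G(x)\cap N_G(v)$; consequently the whole class $V_j$, apart from $x$ itself, lies in $N_G(v)\setminus N_G[x]$. Counting non-neighbors gives $|N_G(v)\setminus N_G[x]|=n-1-d_G(x)$, whence $\min\{|V_1|,\dots,|V_i|\}\le |V_j|\le n-1-d_G(x)$, which is exactly the inequality $d_G(x)\le n-1-\min\{|V_1|,\dots,|V_i|\}$ of alternative (ii). If no choice of classes allows this to hold simultaneously for all $x$, I would show the only way the coloring can still exist is that the missing class cannot be realized within one connected piece, which forces $G[N_G(v)]$ to be disconnected, i.e. alternative (i).

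The main obstacle I anticipate is the borderline between (i) and (ii): pinning down precisely when a vertex $x$ is obliged to see all $i$ colors of $N_G(v)$ among its neighbors, so that the missing color $j$ must be its own color $f(x)$, versus when $x$ genuinely omits a foreign class. In the former case the counting only yields $|V_{f(x)}|-1\le n-1-d_G(x)$, so I must argue that the subsets $V_1,\dots,V_i$ claimed in (ii) may be chosen freely as nonempty disjoint subsets (not necessarily the color classes) to meet the degree inequality through a single common choice rather than a per-vertex one. This is where the quantity $n-1-d_G(x)$ has to be handled with care, particularly for vertices of $N_G(v)$ whose degree is close to $n-1$, since such vertices constrain $\min_k|V_k|$ the most and ultimately decide whether the connected case (ii) is attainable at all.
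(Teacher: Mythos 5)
Your upper bound argument ($t_i(G)\le i+1$ because $v$ is universal) is the same as the paper's and is fine, but the genuine gap in your proposal is the lower bound $t_i(G)\ge i+1$. You correctly reduce it to partitioning $N_G(v)$ into $i$ nonempty classes so that each vertex of $N_G(v)$ is adjacent to at most $i-1$ of them, and then only sketch (``I would build this by cases\dots'') how such a partition is to be found. No such partition need exist. Take $G=K_n$ with $n\ge i+2$: then $G[N_G(v)]\cong K_{n-1}$ is connected, and in any assignment of $i$ colors to its $n-1\ge i+1$ vertices some class contains two vertices; each such vertex is adjacent to every other vertex of $N_G(v)$, hence sees all $i$ classes, and together with the color of $v$ it sees $i+1$ colors. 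So $t_i(K_n)=i$, the partition you are after does not exist, and in fact the equality $t_i(G)=i+1$ you are trying to prove is false for this graph. Your closing remark --- that vertices of $N_G(v)$ with degree close to $n-1$ ``ultimately decide whether the connected case (ii) is attainable at all'' --- is exactly the unresolved point, and it cannot be resolved as the theorem is stated.

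You should know that the paper's own proof has the same hole: in its Case 2 (some $x\in N_G(v)$ with $d_G(x)\ge i+1$) it prescribes a ``coloring'' in which all of $N_G(v)$ is colored from $\{2,\dots,i+1\}$ with no effective restriction and asserts ``it can be checked'' that it is an $N_i$-vertex coloring; for $K_n$ every such coloring fails the check. So your proposal does not so much diverge from the paper as make its gap visible; a correct result would have to be a characterization (as in the $N_2$ literature the paper cites): $t_i(G)=i+1$ if and only if $G[N_G(v)]$ is disconnected or $N_G(v)$ admits a partition of the kind you describe, and $t_i(G)\le i$ otherwise. On the ``moreover'' part, your observation that the color a vertex $y$ misses may be its own (so counting only yields $|V_{f(y)}|-1\le n-1-d_G(y)$) is a real subtlety which the paper's proof also overlooks --- its contradiction argument is valid only when the missed class is not $y$'s own --- but in your proposal this, too, remains an acknowledged obstacle rather than a completed step.
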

	
    \begin{proof}
    	Let $v\in V(G)$ with $d_G(v)=n-1$.~The neighbors of $v$ have at most $i$ colors that are distinct with the color of $v$.~Thus, we have $t_i(G)\leq i+1$. Now we consider following cases to show that $t_i(G)=i+1$.
    	
    	{\em Case 1}: Assume that $G[N_G(v)]$ is disconnected and has $h$ connected components, say $G_1, \dots, G_h$. We color $V(G)$ as follows: $f(v)=1$,~$f(u)=2$ for each vertex $u$ of some components, say $G_1$, of $G[N_G(v)]$,~and $f(w)\in \{3, 4, \dots, i+1\}$ for each remaining vertex $w$. Observe that this is an $N_i$-vertex coloring of $G$ using $i+1$ colors,~which implies that $t_i(G)\geq i+1$.~Hence we have $t_i(G)=i+1$.
    	
    	{\em Case 2}: Assume that $G[N_G(v)]$ is connected. There are two cases as follows. If $d_G(x)\leq i$ for each $x\in N_G(v)$, then we color $V(G)$ as follows: $f(v)=1$ and $f(x)\in \{2,3,\dots,i+1\}$ for each vertex $x\in N_G(v)$. It can be checked that this is an $N_i$-vertex coloring of $G$ using $i+1$ colors. On the other hand, if $d_G(x)\geq i+1$ for some $x\in N_G(v)$, say $x_1, \dots, x_t$, then we color $V(G)$ as follows: $f(v)=1$, $f(x^{\prime})\in \{2,\dots,i+1\}$ for each vertex in $\bigcup^t_{k=1}N_G(x_k)$ and $f(w)\in \{2,3,\dots,i+1\}$ for each remaining vertex $w$. It can be checked that this is an $N_i$-vertex coloring of $G$ using $i+1$ colors,~which implies that $t_i(G)\geq i+1$.~Hence we have $t_i(G)=i+1$.
    	
    	Let $f$ be an $N_i$-vertex coloring of $G$ with $i+1$ colors, and $G[N_G(v)]$ is connected. Then, there are $i$ colors in $N_G(v)$, say $2, 3, \dots, i+1$ (the color of $v$ is 1), and $V_k=\{u\in N_G(v): f(u)=k+1\}$ for $1\leq k\leq i$. If there exists a vertex $y\in N_G(v)$ which satisfies that $d_G(y)>n-1-\min\{|V_1|, \dots, |V_i|\}$, then the neighbors of $y$ have $i+1$ color, a contradiction. Hence, we have $d_G(x)\leq n-1-\min\{|V_1|, \dots, |V_i|\}$ for each $x\in N_G(v)$.
    \end{proof}
  
  	When $\Delta(G) =n-2$, we also get the value for $t_i(G)$.
    
    \begin{theo}
    For a connected graph $G$ of order $n\geq i+2$ and $\Delta(G)=n-2$, we have $t_i(G)=i+2$.~Moreover, let $u$ and $v$ be two vertices of degree $n-2$ such that $u\notin N_G(v)$, then in any $N_i$-vertex coloring of $G$ with $i+2$ colors, there exist $i$ nonempty disjoint subsets $V_1, \dots, V_i$ of $N_G(v)$ such that $2\leq d_G(x)\leq n-1-\min\{\{|V_1|, \dots, |V_i|\}\backslash V_{\min}\}-V_{\min}$ for each $x\in N_G(v)$, where $V_{\min}=\min\{|V_1|, \dots, |V_i|\}$.
    \label{Theorem 3.5}
    \end{theo}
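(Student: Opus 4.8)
The plan is to follow the pattern of Theorem~\ref{Theorem 3.4}: first pin down the value $t_i(G)=i+2$ by matching an easy counting upper bound with an explicit coloring, and then extract the structural ``moreover'' description from the forced shape of any optimal coloring.

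First I would prove the upper bound. Fix a vertex $v$ with $d_G(v)=n-2$ and let $u$ be the unique vertex with $u\neq v$ and $u\notin N_G(v)$, so that $V(G)=\{v\}\cup N_G(v)\cup\{u\}$ is a disjoint union. Every color used by an $N_i$-vertex coloring $f$ lies in $f(N_G(v))\cup\{f(v)\}\cup\{f(u)\}$, and the defining condition at $v$ gives $|f(N_G(v))|\leq i$. Hence the number of colors is at most $i+1+1=i+2$, that is, $t_i(G)\leq i+2$.

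For the matching lower bound I would exhibit an $N_i$-vertex coloring using $i+2$ colors, spending two ``private'' colors on $v$ and $u$ and the remaining $i$ colors on $N_G(v)$: set $f(v)=1$, $f(u)=i+2$, and color $N_G(v)$ with $2,\dots,i+1$. The only nontrivial constraint is the $N_i$ condition at the vertices $x\in N_G(v)$: such an $x$ already sees color $1$ (from $v$) and possibly color $i+2$ (if $x\in N_G(u)$), so the colors appearing on the neighbours of $x$ inside $N_G(v)$ must be kept few. I would organise this exactly as in Theorem~\ref{Theorem 3.4}, splitting according to whether $G[N_G(v)]$ is connected and distributing the colors $2,\dots,i+1$ over the components (respectively over the second neighbourhoods $\bigcup_k N_G(x_k)$ of the high-degree neighbours) so that each $x$ meets at most $i-2$ of the colors $2,\dots,i+1$, leaving room for the two forced colors $1$ and $i+2$. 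Verifying that this assignment realises all $i+2$ colors while respecting the $N_i$ condition at every vertex is the main obstacle, since the adjacencies to $u$ must be compatible with the chosen partition of $N_G(v)$ at every neighbour simultaneously.

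Finally I would treat the ``moreover'' statement. Assume $u$ and $v$ both have degree $n-2$ with $u\notin N_G(v)$; then $v$ is adjacent to everything except $u$ and $u$ is adjacent to everything except $v$, so $N_G(u)=N_G(v)=:S$ with $|S|=n-2$, and every $x\in S$ is adjacent to both $u$ and $v$, giving $d_G(x)\geq 2$. Take any $N_i$-vertex coloring $f$ with exactly $i+2$ colors. Since $|f(S)|\leq i$ and all $i+2$ colors are used, $f(v)$ and $f(u)$ must be distinct and must avoid $f(S)$, forcing $|f(S)|=i$; after relabelling, $f(v)=1$, $f(u)=i+2$, and $S$ carries the colors $2,\dots,i+1$. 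Put $V_k=\{x\in S:f(x)=k+1\}$ for $1\leq k\leq i$, which are $i$ nonempty disjoint subsets of $N_G(v)$. Fix $x\in S$: its neighbourhood already contains the colors $1$ and $i+2$ from $v$ and $u$, so among $2,\dots,i+1$ it meets at most $i-2$, i.e. $x$ has no neighbour in at least two of the classes $V_1,\dots,V_i$. As $x$ has at most $n-1$ possible neighbours and misses two whole classes, of joint size at least $V_{\min}+\min\big(\{|V_1|,\dots,|V_i|\}\setminus V_{\min}\big)$, we get $d_G(x)\leq n-1-\min\big(\{|V_1|,\dots,|V_i|\}\setminus V_{\min}\big)-V_{\min}$, which with $d_G(x)\geq 2$ is the claimed inequality. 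The hard part throughout is the lower-bound construction: the upper bound and the structural deduction are short counting arguments, but producing one coloring that uses all $i+2$ colors and keeps every neighbour of $v$ within $i$ colors requires reconciling the partition of $N_G(v)$ with the neighbourhood of $u$, and it is exactly the obstruction to doing this freely that crystallises into the recorded degree restriction.
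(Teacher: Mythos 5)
Your upper bound $t_i(G)\leq i+2$ and your extraction of the ``moreover'' inequality follow the same route as the paper (your treatment of the latter is, if anything, more careful than the paper's). The problem is the lower bound: you set up the coloring $f(v)=1$, $f(u)=i+2$, colors $2,\dots,i+1$ on $N_G(v)$, and then explicitly defer what you call ``the main obstacle,'' namely verifying that these $i$ colors can be distributed over $N_G(v)$ so that every $x\in N_G(v)$ still sees at most $i$ colors. That deferred step is a genuine gap, and it cannot be filled, because the equality $t_i(G)=i+2$ is false under the stated hypotheses. Concretely, take $i=3$ and let $G$ have vertices $v,u,x_1,\dots,x_5$, where $x_1x_2x_3x_4x_5x_1$ is a $5$-cycle and both $v$ and $u$ are adjacent to every $x_j$. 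Then $n=7\geq i+2$, $G$ is connected, and $\Delta(G)=5=n-2$. If an $N_3$-vertex coloring $f$ used $5$ colors, then by your own counting argument $|f(\{x_1,\dots,x_5\})|=3$ and $f(v)\neq f(u)$ are two further colors outside $f(N_G(v))$; but each $x_j$ is adjacent to $v,u,x_{j-1},x_{j+1}$ and already sees the two colors $f(v),f(u)$, so the $N_3$-condition forces $f(x_{j-1})=f(x_{j+1})$ for all $j$ (indices mod $5$), and since $j\mapsto j+2$ generates a $5$-cycle this makes all $x_j$ monochromatic, contradicting $|f(\{x_1,\dots,x_5\})|=3$. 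Hence $t_3(G)\leq 4<i+2$ (in fact $t_3(G)=4$: take $f(v)=f(u)=1$ and cycle colors $2,3,2,3,4$).

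So the obstruction you predicted---reconciling the adjacencies to $u$ with the partition of $N_G(v)$ at every neighbour simultaneously---is not merely hard, it is impossible in general, and no amount of ``organising it as in Theorem~\ref{Theorem 3.4}'' will succeed on this graph. You should know that the paper's own proof has exactly the same hole: in each of its cases it writes down an underspecified assignment ($f$ taking values in $\{3,\dots,i+2\}$ on $N_G(v)$) and asserts ``it can be checked'' that it is an $N_i$-vertex coloring, which the example above refutes; adding one more vertex adjacent to both $u$ and $v$ (so that $G[N_G(v)]$ is disconnected) refutes its disconnected case as well. A secondary, smaller issue in your ``moreover'' step: when one of the two color classes missed by $x$ is the class containing $x$ itself, the number of excluded potential neighbours is $|V_a|+|V_b|-1$ rather than $|V_a|+|V_b|$, so your derivation only yields $d_G(x)\leq n-V_{\min}-\min\{\{|V_1|,\dots,|V_i|\}\backslash V_{\min}\}$ in that case, one more than the stated bound; the paper glosses over the same point. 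In short, your plan faithfully reproduces the paper's strategy, flags honestly the step the paper hand-waves, and that step is precisely where the theorem itself fails without additional hypotheses.
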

    
    \begin{proof}
    	 Let $v\in V(G)$ and $d_G(v)=n-2$. If we assign $i+3$ colors to $V(G)$, then $N_G(v)$ have at least $i+1$ different colors.~Thus,~$G$ can not have an $N_i$-vertex coloring with at least $i+3$ colors, which implies that $t_i(G)\leq i+2$. Now we consider the following two cases to show that $t_i(G)=i+2$.
    	
    	{\em Case 1}: $G[N_G(v)]$ is disconnected and has $h$ connected components, say $G_1, \dots, G_h$. We color $V(G)$ as follows: $f(v)=1$, $f(u)=2$, $f(w)=3$ for each vertex $w$ of some components, say $G_1$, of $G[N_G(v)]$,~and $f(x)\in \{4, 5, \dots, i+2\}$ for each remaining vertex $x$. Observe that this is an $N_i$-vertex coloring of $G$ using $i+2$ colors,~which implies that $t_i(G)\geq i+2$.~Hence we have $t_i(G)=i+2$.
    	
    	{\em Case 2}: $G[N_G(v)]$ is connected. On one hand, if $d_G(x)\leq i$ for each $x\in N_G(v)$, then we color $V(G)$ as follows: $f(v)=1$, $f(u)=2$ and $f(x)\in \{3, \dots, i+2\}$ for each $x\in N_G(v)$. It can be checked that this is an $N_i$-vertex coloring of $G$ using $i+2$ colors. On the other hand, if $d_G(x)\geq i+1$ for some $x\in N_G(v)$, say $x_1, \dots, x_t$, then we color $V(G)$ as follows: $f(v)=1$, $f(u)=2$ and $f(x^{\prime})\in \{3,\dots,i+2\}$ for each vertex in $\bigcup^t_{k=1}N_G(x_k)$ and $f(w)\in \{3,\dots,i+2\}$ for each remaining vertex $w$. It can be checked that this is an $N_i$-vertex coloring of $G$ using $i+2$ colors, which implies that $t_i(G)\geq i+2$.~Hence we have $t_i(G)=i+2$.
    
    	If $t_i(G)=i+2$, then there is an $N_i$-vertex coloring $f$ of $G$ such that $f(u)\neq f(v)$ and $i$ different colors in $N_G(v)$ are distinct with $f(u)$ and $f(v)$, such as $f(v)=1$, $f(u)=2$ and $f(x)\in \{3, \dots, i+2\}$ for each remaining vertex $x$. Let $V_k=\{u\in N_G(v): f(u)=k+2\}$ for $1\leq k\leq i$.~Since each $x\in N_G(v)$ is adjacent to $u$ and $v$, and each vertex can be adjacent to some vertices in $N_G(v)\backslash x$, where these vertices have at most $i-2$ colors, it implies that $2\leq d_G(x)\leq n-1-\min\{\{|V_1|, \dots, |V_i|\}\backslash V_{\min}\}-V_{\min}$ for each $x\in N_G(v)$, where $V_{\min}=\min\{|V_1|, \dots, |V_i|\}$.
    	\end{proof}

    We obtain a sharp upper bound for $t_i(G)$ in terms of $\Delta(G)$ when $\Delta(G)$ is general.
    
    \begin{theo}
    Let $n$ and $k$ be integers such that $n\geq i$ and $1\leq i\leq k\leq n-1$,~and let $G$ be a connected graph of order $n$ and $\Delta(G)=k$.~Then $t_i(G)\leq n-k+i$. Moreover, this bound is sharp.
    \label{Theorem 3.6}
    \end{theo}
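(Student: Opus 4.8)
The plan is to split the statement into the inequality $t_i(G)\le n-k+i$ and the matching lower bound, proving the former by a partition-and-count argument anchored at a maximum-degree vertex, and the latter by exhibiting one explicit graph with an $N_i$-vertex coloring that attains $n-k+i$ colors. For the upper bound I would fix a vertex $v$ with $d_G(v)=\Delta(G)=k$ and partition $V(G)$ into the three blocks $\{v\}$, $N_G(v)$, and $R=V(G)\setminus N_G[v]$, of sizes $1$, $k$, and $n-k-1$. In any $N_i$-vertex coloring $f$, the block $\{v\}$ contributes at most one color, the block $N_G(v)$ contributes at most $i$ colors (this is precisely the $N_i$ condition read off at $v$), and the block $R$ contributes at most $|R|=n-k-1$ colors. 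Summing over the partition yields at most $1+i+(n-k-1)=n-k+i$ distinct colors, so $t_i(G)\le n-k+i$. Note that if the color of $v$ happens to coincide with a color appearing in $N_G(v)$ or $R$ the total only drops, so the bound is safe.

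For sharpness I would build $G$ as a star with a pendant path: take a center $v$ adjacent to $u_1,\dots,u_k$, and attach a path $u_1 z_1 z_2 \cdots z_{n-k-1}$ of $n-k-1$ new vertices at $u_1$. This graph is connected, has order $1+k+(n-k-1)=n$, and since $k\ge i\ge 2$ every vertex other than $v$ has degree at most $2\le k$, so $\Delta(G)=k$ is realized at $v$. I would then color $v$ with color $1$, assign the distinct colors $2,\dots,i+1$ to $u_1,\dots,u_i$ and reuse color $2$ on any remaining $u_{i+1},\dots,u_k$ (so that $N_G(v)$ sees exactly $i$ colors), and give the path vertices $z_1,\dots,z_{n-k-1}$ the fresh distinct colors $i+2,\dots,n-k+i$.

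The remaining step is to verify this is a genuine $N_i$-vertex coloring using all $n-k+i$ colors. The color list $\{1,\dots,i+1\}\cup\{i+2,\dots,n-k+i\}$ makes the count immediate. For the $N_i$ property one checks each vertex in turn: $N_G(v)$ uses exactly $i$ colors; each $u_j$ with $j\ge 2$ has neighborhood $\{v\}$ (one color); the junction $u_1$ has neighborhood $\{v,z_1\}$ (two colors); each interior path vertex has a two-vertex neighborhood and the endpoint $z_{n-k-1}$ a one-vertex neighborhood. Since $i\ge 2$, every neighborhood uses at most $i$ colors, so the coloring is valid and $t_i(G)\ge n-k+i$; combined with the upper bound this gives $t_i(G)=n-k+i$.

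I expect the upper bound to be routine; the only real care is in the sharpness construction, where one must arrange that $N_G(v)$ attains exactly $i$ colors while the $n-k-1$ attached vertices each receive a private color without any neighborhood exceeding $i$ colors. The pendant-path device handles this cleanly precisely because every attached vertex has degree at most two, keeping each such neighborhood within the budget $i\ge 2$. In the degenerate case $n-k-1=0$ (that is, $\Delta(G)=n-1$) the construction collapses to the star $K_{1,k}$ and recovers $t_i=i+1=n-k+i$, in agreement with Theorem~\ref{Theorem 3.4}.
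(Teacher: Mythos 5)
Your proof is correct, and both halves run along the same lines as the paper's: the upper bound is the same counting argument over the partition $\{v\}\cup N_G(v)\cup R$ anchored at a vertex of maximum degree (the paper states it contrapositively: more than $n-k+i$ colors would force more than $i$ colors on $N_G(v)$), and sharpness comes from a star on $v,u_1,\dots,u_k$ with the remaining $n-k-1$ vertices attached near one leaf. The one genuine difference is the attachment gadget. The paper joins $w_1,\dots,w_{n-k-1}$ to $u_k$ as a \emph{cycle} (edges $u_kw_1,\,w_1w_2,\dots,\,w_{n-k-1}u_k$), whereas you attach them as a \emph{pendant path} at $u_1$. This is not merely cosmetic: in the paper's graph, when $n-k-1\geq 2$ the vertex $u_k$ has the three neighbors $v$, $w_1$, $w_{n-k-1}$, which receive three distinct colors under the stated coloring, so the paper's witness violates the $N_i$ condition when $i=2$; your path version keeps the neighborhood of every attached vertex (and of $u_1$) down to at most two colors, so it is valid for every $i\geq 2$, and it degenerates gracefully to $K_{1,k}$ when $k=n-1$, matching Theorem~\ref{Theorem 3.4}. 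In short: same strategy, but your sharpness construction is the more robust of the two.
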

    
    \begin{proof}
    	Let $v\in V(G)$ with $d_G(v)=\Delta (G)=k$. Observe that $t_i(G)=1+i+(n-k-1)=n-k+i$.~Such a coloring can be obtained by assigning one color to $v$,~$i$ colors to the vertices in $N_G(v)$,~and one different color to the remaining $n-k-1$ vertices which are not in $N_G(v)$.
    	
    	To prove the sharpness of this upper bound, we define a graph as follows: let
    	\begin{equation*}
    		V(G)=\left\{v,u_1,u_2,\dots,u_k,w_1,w_2,\dots,w_{n-k-1}\right\},
    	\end{equation*} 
    and for $1\leq j\leq n-k-2$, let
    \begin{equation*}
    	E(G)=\left\{vu_1,\dots,vu_k,u_kw_1,w_1w_2,\dots,w_jw_{j+1},\dots,w_{n-k-1}u_k\right\}.
    \end{equation*} 
   We color $v$ with 1,~color $u_1,u_2,\dots,u_k$ with $i$ new colors and color $w_1,\dots,w_{n-k-1}$ with other $n-k-1$ new colors. Observe that we use $n-k+i$ colors totally and this coloring is an $N_i$-vertex coloring of $G$. Assume that there is an $N_i$-vertex coloring of $G$ with more than $n-k+i$ colors, then $N_G(v)$ uses more than $i$ colors, a contradiction. Hence we have $t_i(G)=n-k+i$.
    \end{proof}
   

	\subsection{Sharp lower bounds in terms of diameter}
     Now we focus on $N_i$-chromaticity and diameter.  A {\em peripheral vertex} in a graph of diameter $d$ is one which has a distance $d$ from some other vertex. The {\em join} of $G_k$ and $G_{k+1}$, denote by $G_k+G_{k+1}$, is the graph with $V(G_k+G_{k+1})=V_k\cup V_{k+1}$ and $E(G_k+G_{k+1})=E_k\cup E_{k+1}\cup\{ uv:u\in V_k,v\in V_{k+1}\}$.~The sequential {\em join} of the graphs $G_1,G_2,\dots,G_n$,~denoted by $G_1\biguplus G_2\biguplus \dots \biguplus G_n$ or $\biguplus^n_{k=1}G_k$,~is the graph 
      \begin{equation*}
    \biguplus \limits_{k=1}^n G_k=\bigcup\limits_{k=1}^{n-1} (G_k+G_{k+1})=(G_1+G_2)\cup (G_2+G_3)\cup \dots \cup(G_{n-1}+G_n).
    \end{equation*}
     
     We obtain sharp lower bounds for $t_i(G)$ in terms of diameter. 
    \begin{theo}
    For a connected graph $G$ with order $n\geq i\geq 3$. If $diam(G)=d$, then 
    \begin{equation*}
        t_i(G)\geq \left\{
    \begin{array}{lll}
        \lceil \frac{d}{3}\rceil i-\lceil \frac{i-1}{2}\rceil, & for \; d(\;mod\;3)\equiv 1 & \\
        \lceil \frac{d}{3}\rceil i, & for \; d(\;mod\;3)\equiv 2 & \\
        \lceil \frac{d}{3}\rceil i+1, & for \; d(\;mod\;3)\equiv 0
                      \end{array}
                    \right.
    \end{equation*}
    Moreover, these bounds are sharp.
    \label{Theorem 3.9}
    \end{theo}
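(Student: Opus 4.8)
The plan is to prove the bound constructively for an arbitrary connected graph $G$ with $diam(G)=d$: I will build an explicit $N_i$-vertex coloring that uses at least the stated number of colors. Fix vertices $u,v$ with $d_G(u,v)=d$ and partition $V(G)$ into the distance layers $L_k=N^k_G(u)$, $0\le k\le d$, so that $L_0=\{u\}$ and $v\in L_d$. Because the $L_k$ are the BFS distance classes of a connected graph, every edge lies inside one layer or between two consecutive layers; hence $N_G(x)\subseteq L_{k-1}\cup L_k\cup L_{k+1}$ for each $x\in L_k$. This yields the basic reduction I will use throughout: if a coloring is chosen so that, for every $k$, the union $L_{k-1}\cup L_k\cup L_{k+1}$ carries at most $i$ distinct colors, then every vertex sees at most $i$ colors in its neighborhood and the coloring is automatically an $N_i$-vertex coloring. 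The task thus becomes the purely combinatorial one of assigning a palette $P_k$ to each layer so as to maximize $|\bigcup_k P_k|$ subject to $|P_{k-1}\cup P_k\cup P_{k+1}|\le i$ for all $k$ (with $P_{-1}=P_{d+1}=\emptyset$).

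Next I construct the coloring by a period-three pattern along the layers. Group the layers into consecutive blocks $\{L_{3t},L_{3t+1},L_{3t+2}\}$ and give block $t$ its own set of $i$ new colors, disjoint from the colors of the other blocks, distributed as one color on the first layer $L_{3t}$, $i-2$ colors on the middle layer $L_{3t+1}$, and one color on the last layer $L_{3t+2}$ (here one uses that the relevant layers contain enough vertices to host the assigned colors). The verification of the window condition is the heart of the argument and is where the period-three pattern is essential: a middle-layer vertex sees exactly the $i$ colors of its own block; a first-layer vertex of block $t$ sees the single color carried by the last layer of block $t-1$ together with the $1+(i-2)$ colors of $L_{3t}\cup L_{3t+1}$, for a total of $i$; and symmetrically a last-layer vertex sees $i$ colors. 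The decisive point is that \emph{no} layer ever carries all $i$ colors of its block, which is exactly what prevents the ``cascade'' that would otherwise force neighboring blocks to reuse colors and collapse the count; consequently each complete block contributes $i$ fresh colors.

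Finally I count colors and treat the trailing, possibly incomplete, block according to $d \bmod 3$, which is where the three cases arise. When $d\equiv 2\pmod 3$ the $d+1$ layers split into exactly $\lceil d/3\rceil$ complete blocks, giving $\lceil d/3\rceil i$ colors. When $d\equiv 0\pmod 3$ there are $\lceil d/3\rceil$ complete blocks together with one extra layer $L_d$; since $L_d$ has no successor layer, a vertex of $L_d$ sees only $L_{d-1}\cup L_d$, and the binding constraint comes from the vertices of $L_{d-1}$, whose window $L_{d-2}\cup L_{d-1}$ already contains $(i-2)+1=i-1$ colors, leaving room for exactly one additional fresh color on $L_d$, for a total of $\lceil d/3\rceil i+1$. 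When $d\equiv 1\pmod 3$ there are two leftover layers $L_{d-1},L_d$, and analysing the windows straddling these layers and the last complete block shows that they can jointly host only about half of a full block's palette; after optimizing how this pair shares its budget with the preceding block one obtains the deficit $\lceil (i-1)/2\rceil$ and the value $\lceil d/3\rceil i-\lceil (i-1)/2\rceil$.

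I expect the main obstacle to be precisely these two boundary analyses, and above all the case $d\equiv 1\pmod 3$: verifying that the trailing pair of layers realises exactly $\lceil i/2\rceil$ new colors while keeping every straddling window within $i$ requires a careful, case-checked optimization of how the final complete block and the two leftover layers share colors, in contrast to the clean uniform pattern available in the interior. The sharpness claim is then handled separately by exhibiting graphs (sequential joins $\biguplus_k G_k$ with appropriately chosen block sizes) on which the windows produced by the above scheme are tight, so that the construction cannot be improved and equality is attained in each of the three cases.
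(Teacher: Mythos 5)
Your lower-bound construction is essentially the paper's: the paper also fixes a peripheral vertex, works with the distance layers $N^k_G(v)$, and assigns pairwise disjoint palettes with period three; the only difference is the distribution inside a block --- the paper puts $\lfloor (i-1)/2\rfloor$, $\lceil (i-1)/2\rceil$ and $1$ colors on layers $3k-2$, $3k-1$, $3k$ (with a single color on layer $0$), while you put $(1,\,i-2,\,1)$ on $\{L_{3t},L_{3t+1},L_{3t+2}\}$. Your window verification is correct, and your counts for $d\equiv 0,2 \pmod 3$ match the statement. But your $d\equiv 1$ analysis misdescribes your own pattern: writing $d=3q+1$, the only binding constraints on the two leftover layers are $|P_{d-1}|\le 1$ (the window at $L_{d-2}$ already carries $(i-2)+1$ colors) and $|P_{d-1}|+|P_d|\le i-1$ (the window at $L_{d-1}$ carries one old color), so the trailing pair hosts $1+(i-2)=i-1$ fresh colors --- not ``about half of a full block's palette'' and not $\lceil i/2\rceil$. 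Since $i-1\ge \lceil i/2\rceil$, the stated inequality still follows and the ``careful, case-checked optimization'' you anticipate does not exist; the deficit your scheme produces is $1$, and it is precisely the paper's lopsided distribution that makes its count land exactly on $\lceil d/3\rceil i-\lceil (i-1)/2\rceil$. (Both you and the paper tacitly assume every layer has enough vertices to carry its palette; you at least flag this, the paper does not.)

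The genuine gap is the sharpness half, which you reduce to the plan of exhibiting sequential joins ``on which the windows produced by the above scheme are tight, so that the construction cannot be improved.'' Tightness of the windows of one particular coloring says nothing about other colorings; what is required is an upper bound valid for \emph{every} $N_i$-vertex coloring of $\biguplus_{k=1}^{d+1}G_k$, and that is what the paper supplies (however tersely): if more colors were used, some triple $V(G_{k-1})\cup V(G_k)\cup V(G_{k+1})$ would carry at least $i+1$ colors, and since $a_k\ge i$ some vertex of $V(G_k)$ would see $i+1$ colors in its neighborhood. Worse, for $d\equiv 1\pmod 3$ and $i\ge 4$ your plan cannot be repaired, because your own construction beats the stated value on exactly these extremal graphs: for $i=4$, $d=4$, give the five parts the disjoint palettes $\{1,2\},\{3\},\{4\},\{5\},\{6,7\}$; every vertex then sees at most $4$ colors, yet $7>6=\lceil d/3\rceil i-\lceil (i-1)/2\rceil$ colors are used. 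So equality is not attained on this family when $i\ge 4$, and sharpness in the $d\equiv 1$ case cannot be established by arguing your scheme is unimprovable --- indeed, your stronger count shows that the paper's own pigeonhole step (``more than $\lceil d/3\rceil i-\lceil (i-1)/2\rceil$ colors force a window with $i+1$ colors'') is itself invalid as stated, so salvaging this case would require either the paper's distribution together with a genuinely correct matching upper bound, or a different extremal family.
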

    
    \begin{proof}
    	Let $v$ be a peripheral vertex of $G$.~We color $V(G)$ as follows: $f(v)=1$,~$f(u)\in \{2+(k-1)i, \dots, ki-\lceil \frac{i-1}{2}\rceil\}$ for each $u\in N^{3k-2}_G(v)$,~$f(u)\in \{ki-\lceil \frac{i-1}{2}\rceil+1, \dots, ki\}$ for each $u\in N^{3k-1}_G(v)$, and $f(u)=ki+1$ for each $u\in N^{3k}_G(v)$,~where $1\leq k\leq \lceil \frac{d}{3}\rceil$. It can be checked that $f$ is an $N_i$-vertex coloring of $G$.
    	
    	To prove the sharpness of these lower bounds, we define a graph $\biguplus^{d+1}_{k=1}G_k$ as follows:
    	
    	For an integer $d\geq 2$, let $a_1$, $\dots$, $a_{d+1}$ be positive integers such that $a_k\geq i$, where $2\leq k\leq d$. Let $G_1$, $\dots$, $G_{d+1}$ be $d+1$ disjoint complete graphs such that $V(G_{k})=V_k$, $E(G_{k})=E_k$ and $|V(G_k)|=a_k$, where $1\leq k\leq d+1$. Moreover, let  $$V(\biguplus^{d+1}_{k=1}G_k)=\bigcup^{d}_{k=1}V(G_k+G_{k+1})$$ and $$E(\biguplus^{d+1}_{k=1}G_k)=\bigcup^{d}_{k=1}E(G_k+G_{k+1}).$$ Clearly, $|V(\biguplus^{d+1}_{k=1}G_k)|=|\bigcup^{d}_{k=1}(V(G_k+G_{k+1}))|=\sum_{k=1}^{d+1} a_k$.
    	
    	It can be checked that $diam(\biguplus_{k=1}^{d+1}G_k)=d$ and so we have
    	\begin{equation*}
        t_i(\biguplus^{d+1}_{k=1}G_k)\geq \left\{
        \begin{array}{lll}
        \lceil \frac{d}{3}\rceil i-\lceil \frac{i-1}{2}\rceil & for \; d(\;mod\;3)\equiv 1 &  \\
        \lceil \frac{d}{3}\rceil i & for \; d(\;mod\;3)\equiv 2 &  \\
        \lceil \frac{d}{3}\rceil i+1 & for \; d(\;mod\;3)\equiv 0
                      \end{array}
                    \right.
        \end{equation*}
    	
    	Let $d\equiv 1~(\mod~3)$ where $d\geq 2$. Assume that there is an $N_i$-vertex coloring of $\biguplus_{k=1}^{d+1}G_k$ with more than $\lceil \frac{d}{3}\rceil i-\lceil \frac{i-1}{2}\rceil$ colors.~There is an integer $k$~($2\leq k\leq d$),~such that $\bigcup_{j=k-1}^{k+1}V(G_j)$ can be colored with at least $i+1$ different colors.~Since $a_k\geq i$,~there is a vertex $u\in \bigcup_{j=k-1}^{k+1}V(G_j)$ such that the neighbors of $u$ use $i+1$ colors, a contradiction.~Thus,~we have $t_i(\biguplus_{k=1}^{d+1}G_k)=\lceil \frac{d}{3}\rceil i-\lceil \frac{i-1}{2}\rceil$ in this case.
    	
    	Similarly,~we can also deduce that $t_i(\biguplus_{k=1}^{d+1}G_k)=\lceil \frac{d}{3}\rceil i$ when $d\equiv 2~(\mod~3)$, and $t_i(\biguplus_{k=1}^{d+1}G_k)=\lceil \frac{d}{3}\rceil i+1$ when $d\equiv 0~(\mod~3)$. Thus, 
    	\begin{equation*}
        t_i(\biguplus^{d+1}_{k=1}G_k)= \left\{
        \begin{array}{lll}
        \lceil \frac{d}{3}\rceil i-\lceil \frac{i-1}{2}\rceil & for \; d(\;mod\;3)\equiv 1 &  \\
        \lceil \frac{d}{3}\rceil i & for \; d(\;mod\;3)\equiv 2 &  \\
        \lceil \frac{d}{3}\rceil i+1 & for \; d(\;mod\;3)\equiv 0
                      \end{array}
                    \right.
        \end{equation*}
    \end{proof}

\section{Precise Values for the $N_i$-Chromatic Number of Graph Classes}
	
	
	
	
	
	
Akbari, Alipourfard, Jandaghi and Mirtaheri have obtained $t_2(T)$ for all trees $T$.
	
	\begin{theo}
	\cite{bib:one} For every tree $T$ of order $n$ with $l$ leaves, $t_2(T)=n-l+2$.
	\end{theo}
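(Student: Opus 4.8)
The plan is to establish $t_2(T)=n-l+2$ by induction on the order $n$, at each step peeling off all the leaves hanging at a single support vertex. I would first dispose of the base cases: for $T=K_2$ one has $l=2$ and two distinct colours give $t_2=2=n-l+2$; for a star $K_{1,m}$ with $m\ge 2$ the centre is adjacent to all $m$ leaves, so $N_T(\text{centre})$ carries at most two colours and hence $t_2\le 3$, while colouring the leaves with two colours and the centre with a third attains $3=n-l+2$.

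For the inductive step I would assume $T$ is not a star, so $diam(T)\ge 3$, and take a longest path $v_0v_1\cdots v_d$. Then $u:=v_1$ is a support vertex whose neighbours are all leaves except for one internal neighbour $w:=v_2$. Writing $L_u=N_T(u)\setminus\{w\}$ for its $p\ge 1$ leaf-neighbours, I would set $T'=T-L_u$. Deleting these leaves turns $u$ into a leaf of $T'$ while $w$ stays internal, so $T'$ is a tree with $n'=n-p$ and $l'=l-p+1$; the induction hypothesis then gives $t_2(T')=n'-l'+2=n-l+1$.

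For the lower bound I would extend an optimal colouring of $T'$ to $T$ by giving every vertex of $L_u$ one brand-new colour $c_{\mathrm{new}}$: only the neighbourhoods of $u$ and of the new leaves change, each new leaf sees only $u$, and $N_T(u)=L_u\cup\{w\}$ now carries exactly $\{c_{\mathrm{new}},f(w)\}$, so the colouring is still $N_2$ and uses $(n-l+1)+1$ colours. For the upper bound I would start from an optimal $N_2$-colouring $f$ of $T$ and restrict it to $T'$; this restriction is again an $N_2$-colouring and so uses at most $t_2(T')=n-l+1$ colours. The decisive point is that passing from $T$ to $T'$ can destroy at most one colour: since $u$ is internal, $N_T(u)=L_u\cup\{w\}$ carries at most two colours, one of which is $f(w)$, so the leaves of $L_u$ only use colours from a two-element set $\{f(w),\gamma\}$; as $f(w)$ survives on $w\in T'$, the single colour $\gamma$ is all that can disappear. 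Hence $t_2(T)\le(n-l+1)+1=n-l+2$, matching the lower bound.

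I expect the main obstacle to be exactly this last counting step. A naive argument only shows that deleting $L_u$ might lose the two colours appearing on $L_u$, which is off by one and yields the too-weak bound $n-l+3$; the rescue is that the $N_2$-condition at $u$, together with $w\in N_T(u)$, forces one of those two colours to equal $f(w)$ and hence to be retained in $T'$. I would also be careful to track the leaf count across the deletion, since it is precisely the $+1$ coming from $u$ turning into a leaf that makes the arithmetic $n'-l'+2=n-l+1$ close up.
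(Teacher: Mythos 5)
Your proof is correct, but note that the paper does not actually prove this statement: it is quoted from reference [1] (Akbari et al.), and the closest argument in the paper is the inductive proof of the analogous result for $t_3(T)$ (Theorem 3.7, $t_3(T)=2n-2l+2-n_2$). That proof deletes a \emph{single} leaf $u$ and splits into three cases according to the degree of its neighbour $w$ in $T'=T\setminus\{u\}$ (leaf, degree $2$, degree $\geq 3$), with the colour count rising by $1$ in the first two cases and staying fixed in the third. You instead choose a support vertex $u=v_1$ on a longest path and delete \emph{all} of its pendant leaves $L_u$ at once; this guarantees that $w=v_2$ remains internal in $T'$, so the case analysis collapses to a single uniform inductive step, at the price of the longest-path setup, the star base case, and the slightly more delicate leaf arithmetic $l'=l-p+1$. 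The decisive counting step you isolate --- that at most one colour can vanish under the deletion, because the $N_2$-condition at $u$ forces the colours on $L_u$ to lie in a two-element set containing $f(w)$, which survives on $w\in T'$ --- is exactly the idea that is implicit (but never spelled out) in the paper's treatment of its Case 3 and in its claim that $t_3(T)>t_3(T')+1$ leads to a contradiction; your write-up makes this point explicit, which is a genuine improvement in rigour over the template the paper uses for $t_3$.
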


 Now we prove a similar result for $t_3(T)$ as follows.
	
	\begin{theo}
	    For every tree $T$ of order $n$ with $l$ leaves, $t_3(T)=2n-2l+2-n_2$, where $n_2=|\{v\in V(T): d_T(v)=2\}|$.
	\label{Lemma 2.1}
	\end{theo}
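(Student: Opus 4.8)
The plan is to prove the two inequalities $t_3(T)\le 2n-2l+2-n_2$ and $t_3(T)\ge 2n-2l+2-n_2$ separately, after a convenient rewriting. Writing $n_{\ge 3}$ for the number of vertices of degree at least $3$ and using $n=l+n_2+n_{\ge 3}$, the claimed value becomes
\[
2n-2l+2-n_2=n_2+2n_{\ge 3}+2 .
\]
This form is better suited to the argument because it displays the separate contributions of degree-$2$ and high-degree vertices.

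For the upper bound I would take $S$ to be the set of all non-leaf vertices of $T$. Deleting the leaves of a tree leaves a subtree, so $G[S]$ is connected and $S$ is a connected vertex cover. When $|S|\ge 2$, every vertex of $S$ has a neighbour inside $G[S]$, so $S\subseteq N_T(S)$; since every leaf is adjacent to its (non-leaf) parent, this gives $N_T(S)=V(T)$, whence for any $N_3$-coloring $f$ the set $\psi(S)$ is exactly the set of colors used and $t_3(T)=|\psi(S)|$. I would then order $S=\{w_1,\dots,w_s\}$ so that every prefix $W_i=\{w_1,\dots,w_i\}$ induces a connected subgraph, and run the telescoping estimate behind Lemma~\ref{Lemma 3.1}, paying attention to its one delicate point: the intersection bound $|\psi(W_{i-1})\cap\psi(w_i)|\ge 1$ needs $|W_{i-1}|\ge 2$, so only the passages from $W_2$ onward lose a color, yielding
\[
|\psi(S)|\le\sum_{i=1}^{s}|\psi(w_i)|-(s-2).
\]
Because $|\psi(w_i)|\le\min\{d_T(w_i),3\}$, which equals $2$ at a degree-$2$ vertex and $3$ at a high-degree vertex, the right-hand side is at most $2n_2+3n_{\ge 3}-(n_2+n_{\ge 3}-2)=n_2+2n_{\ge 3}+2$. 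The single case $|S|=1$ (a star) I would dispose of directly: there the center contributes one color not in $\psi(S)$, and $\min\{d,3\}+1$ again equals $n_2+2n_{\ge 3}+2$.

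For the lower bound I would argue by induction on $n$, deleting one leaf $y$ with parent $p$ and splitting into three cases by $d_T(p)$. If $d_T(p)\ge 4$, then $p$ stays high-degree and the target value is unchanged, so I extend an optimal coloring of $T-y$ by giving $y$ a color already present on another neighbour of $p$, preserving validity and the count. If $d_T(p)=3$ or $d_T(p)=2$, then deleting $y$ lowers the target by exactly $1$ (in the first case $p$ migrates from the high-degree class into $n_2$, in the second $p$ becomes a leaf and leaves $n_2$), and I extend the coloring of $T-y$ by assigning $y$ a brand-new color; one checks that $p$ then sees at most three colors, so the result is still an $N_3$-coloring and the count rises by one, matching the target. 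The base case $n=2$ is immediate.

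I expect the main obstacle to be the upper bound, specifically the additive constant. A naive application of Lemma~\ref{Lemma 3.1} to the cover $S$ gives only $2|S|+2=2(n_2+n_{\ge 3})+2$, which is too weak by $n_2$; the saving comes from the sharper per-vertex estimate $|\psi(w_i)|\le\min\{d_T(w_i),3\}$ at degree-$2$ vertices, and the exact constant $+2$ (rather than $+1$) hinges on observing that the first amalgamation $W_1\cup\{w_2\}$ carries no forced color collision. Getting both bookkeeping points right, and correctly isolating the star as the sole exceptional configuration, is the delicate part; the lower-bound induction is then routine.
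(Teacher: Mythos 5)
Your proof is correct, but it takes a genuinely different route from the paper's, principally in the upper bound. The paper proves both inequalities by a single induction on $n$: delete a leaf $u$ with neighbour $w$ and split into the cases $d_{T'}(w)=1$, $d_{T'}(w)=2$, $d_{T'}(w)\ge 3$ (exactly your cases $d_T(p)=2$, $d_T(p)=3$, $d_T(p)\ge 4$), concluding $t_3(T)=t_3(T')+1$ in the first two and $t_3(T)=t_3(T')$ in the last. So your lower-bound induction coincides with the constructive half of the paper's argument, but you replace the inductive proof of the upper bound by a direct global count: take the connected vertex cover $S$ of internal vertices, order it with connected prefixes, and run the telescoping estimate behind Lemma~\ref{Lemma 3.1}, sharpened to the per-vertex bound $|\psi(w)|\le\min\{d_T(w),3\}$, which is exactly what produces the $-n_2$ correction and the constant $+2$. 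This buys real rigor: the paper's upper-bound step in its Case 3 asserts that giving $u$ a brand-new colour forces four colours on $N_T(w)$, which presupposes that the restricted coloring already places three colours on $N_{T'}(w)$ --- not automatic for an arbitrary optimal coloring of $T'$ --- whereas your counting argument needs no such exchange and makes the shape $n_2+2n_{\ge 3}+2$ of the answer transparent. What the paper's route buys in exchange is brevity and reusability: the same leaf-deletion induction is recycled almost verbatim for cacti in Theorem~\ref{Theorem 2.3}. The only loose end on your side is the degenerate case $n=2$, where $S=\emptyset$ so neither your $|S|\ge 2$ telescope nor your star case applies; but $t_3(K_2)=2$ matches the formula trivially.
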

	\begin{proof}
	    We prove the lemma by induction on $n$. The result obviously holds for the case that $n=2$. Assume that $t_3(T)=2n-2l+2-n_2$ holds for all trees of order $|V(T)|<n$. And assume that $T^{\prime}=T\backslash \{u\}$ is a tree with $|V(T^{\prime})|=n-1$, where $u$ is a leaf of $T$. By induction hypothesis, we have $t_3(T^{\prime})=2n^{\prime}-2l^{\prime}+2-n_2^{\prime}$, where $n^{\prime}=n-1$. Let $w\in N_T(u)$. Now we focus on the following three cases.
	    
	    {\em Case 1}: $w$ is a leaf of $T^{\prime}$. In this case, $l^{\prime}=l$, $n_2^{\prime}=n_2-1$ and $n^{\prime}=n-1$. We color $u$ with a new color which is distinct with those colors of $V(T^{\prime})$, it implies that $t_3(T)\geq t_3(T^{\prime})+1$. If $t_3(T)>t_3(T^{\prime})+1$, then $V(T^{\prime})$ has more than $t_3(T^{\prime})$ colors, a contradiction. Hence, we have $t_3(T)=t_3(T^{\prime})+1$, and $t_3(T)=2(n-1)-2l+2-(n_2-1)+1=2n-2l+2-n_2$.
	    
	    {\em Case 2}: $w$ is not a leaf of $T^{\prime}$ and $d_{T^{\prime}}(w)=2$. In this case, we have $l^{\prime}=l-1$, $n^{\prime}=n-1$ and $n_2^{\prime}=n_2+1$. We color $u$ with a new color which is distinct with the colors of $T^{\prime}$. With a similar argument to that of Case 1, we have $t_3(T)=t_3(T^{\prime})+1$. Hence, $t_3(T)=t_3(T^{\prime})+1=2(n-1)-2(l-1)+2-(n_2+1)+1=2n-2l+2-n_2$.
	    
	    {\em Case 3}: $w$ is not a leaf of $T^{\prime}$ and $d_{T^{\prime}}(w)\geq 3$. In this case, we have $l^{\prime}=l-1$, $n^{\prime}=n-1$ and $n_2^{\prime}=n_2$. Now we prove that  $t_3(T)=t_3(T^{\prime})$ in this case. 
	    Assume that $t_3(T)>t_3(T^{\prime})$, we color $u$ with a new color which is distinct with the colors of $T^{\prime}$. Now $4$ colors are used for $N_T(w)$, a contradiction. Hence, we have $t_3(T)=t_3(T^{\prime})$, and the color of $u$ is the same as one of the colors of $N_{T^{\prime}}(w)$. Thus, we have $t_3(T)=t_3(T^{\prime})=2(n-1)-2(l-1)+2-n_2=2n-2l+2-n_2$.
	\end{proof}
	
	In fact, with a similar argument, we can extend Theorem~\ref{Lemma 2.1} as follows.
	\begin{theo}
	    For every tree $T$ of order $n$ with $l$ leaves, $t_i(T)=2n-2l+2-\sum\limits _{k=2}^{i-1}n_k$, where $n_k=|\{v\in V(T): d_T(v)=k\}|$.
	\label{Theorem 2.2}
	\end{theo}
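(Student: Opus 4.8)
The plan is to argue by induction on $n$, following the proof of Theorem~\ref{Lemma 2.1} in structure. The base case $n=2$ is immediate: $T$ is an edge, $l=2$, every $n_k=0$, and both sides equal $2$. For the inductive step I would delete a leaf $u$, put $T'=T\setminus\{u\}$, and let $w$ be the unique neighbor of $u$; by the induction hypothesis $t_i(T')=2n'-2l'+2-\sum_{k=2}^{i-1}n_k'$ with $n'=n-1$. I would then compare $t_i(T)$ with $t_i(T')$, tracking how $l$ and the $n_k$ change, splitting into cases according to $d_T(w)$ (equivalently $d_{T'}(w)=d_T(w)-1$), exactly as the three cases of Theorem~\ref{Lemma 2.1}.

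The coloring-theoretic increment is easy to pin down. Given any optimal $N_i$-coloring of $T'$, the only new constraint created by restoring $u$ is at $w$, namely $|\{f(x):x\in N_T(w)\}|\le i$. If $d_T(w)\le i$, then $N_{T'}(w)$ has at most $i-1$ vertices, hence at most $i-1$ colors, so $u$ may be given a brand-new color; this gives $t_i(T)\ge t_i(T')+1$, and since restricting any coloring of $T$ to $T'$ loses at most one color we also get $t_i(T)\le t_i(T')+1$, whence $t_i(T)=t_i(T')+1$. If instead $d_T(w)\ge i+1$, I would color $u$ with a color already present in $N_{T'}(w)$: no new color is needed and no constraint is broken, so $t_i(T)\ge t_i(T')$.

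The delicate step is the matching upper bound $t_i(T)\le t_i(T')$ in the saturated case $d_T(w)\ge i+1$ (the analogue of Case~3 of Theorem~\ref{Lemma 2.1}). Here one must produce an optimal $N_i$-coloring of $T$ in which the color of $u$ is repeated on another vertex, so that deleting $u$ costs no color. Since $N_T(w)$ has $d_T(w)\ge i+1$ vertices but only $\le i$ colors, some two of them collide; the work is to argue that $u$ may be taken to sit on a repeated color without shrinking the overall palette. I expect this to be the main technical obstacle.

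The final — and, I believe, fatal — difficulty is reconciling these increments with the claimed closed form. Carrying the bookkeeping through (e.g. building $T$ up one leaf at a time from a single vertex, where a leaf whose neighbor attains degree $d$ contributes $1$ to the palette iff $d\le i$) yields
\[
t_i(T)=\sum_{v\in V(T)}\min\{d_T(v),i\}-n+2=n-\sum_{k\ge i+1}(k-i)\,n_k,
\]
using $\sum_{v}d_T(v)=2(n-1)$. For $i=3$ this does reduce to $2n-2l+2-n_2$, which is why Theorem~\ref{Lemma 2.1} holds; but for $i\ge 4$ it does not reduce to $2n-2l+2-\sum_{k=2}^{i-1}n_k$. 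Indeed, attaching a leaf so that its neighbor reaches degree $d$ with $3\le d\le i-1$ still admits a fresh color (increment $+1$), whereas the stated right-hand side charges such a vertex a decrement of $1$; already for $T=K_{1,3}$ with $i=4$ the increments give $t_4=4$ while the displayed formula gives $3$. Thus the obstacle is not merely technical: the naive replacement of $n_2$ by $\sum_{k=2}^{i-1}n_k$ over-subtracts, and the identity that this induction actually proves is $t_i(T)=n-\sum_{k\ge i+1}(k-i)\,n_k$.
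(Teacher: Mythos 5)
You are right, and what you have found is not a flaw in your argument but in the statement itself: Theorem~\ref{Theorem 2.2} is false for every $i\geq 4$. Note first that the paper contains no proof of this theorem at all --- it is only asserted to follow ``with a similar argument'' from the $i=3$ case, Theorem~\ref{Lemma 2.1} --- and your increment bookkeeping shows precisely why no such argument can exist. In the leaf-deletion induction, adding a leaf $u$ at a vertex $w$ with $d_T(w)=d$ gains a fresh color exactly when $d\leq i$ (then $N_{T\setminus\{u\}}(w)$ carries at most $i-1$ colors), so internal vertices of degree $k$ with $3\leq k\leq i-1$ must not be penalized; yet the printed formula subtracts $n_k$ for every $2\leq k\leq i-1$. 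That range of harmless degrees is empty precisely when $i=3$, which is why Theorem~\ref{Lemma 2.1} is correct and why its verbatim generalization first breaks at $i=4$. Your counterexample checks out: for $T=K_{1,3}$ and $i=4$, every vertex coloring is an $N_4$-vertex coloring because $\Delta(T)=3<4$, so $t_4(K_{1,3})=4$, while the formula gives $2n-2l+2-(n_2+n_3)=8-6+2-1=3$. More generally, any tree with $\Delta(T)\leq i-1$ and $l\geq 3$ leaves has $t_i(T)=n$, whereas the formula evaluates to $n-l+2<n$; spiders with three long legs thus give counterexamples of arbitrarily large order, so the failure is not a small-order artifact. In the paper's notation, the identity your increments point to reads
\begin{equation*}
t_i(T)\;=\;(i-1)n-(i-1)l+2-\sum_{k=2}^{i-1}(i-k)\,n_k\;=\;n-\sum_{k\geq i+1}(k-i)\,n_k,
\end{equation*}
which agrees with the printed formula only at $i=3$: the correct generalization must rescale the coefficients of $n$, $l$ and of each $n_k$, not merely lengthen the sum.

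Two caveats about your positive claim. The saturated case $d_T(w)\geq i+1$, where you must show $t_i(T)\leq t_i(T\setminus\{u\})$, is indeed a genuine gap: recoloring a repeated-color neighbor of $w$ with the color $f(u)$ can enlarge the palette seen by that neighbor's other neighbors, so a more careful recoloring (or a non-inductive proof of the upper bound) is required. Be aware that Case~3 of the paper's proof of Theorem~\ref{Lemma 2.1} is loose at exactly this spot --- it tacitly assumes that an optimal coloring of $T$ restricts to an optimal coloring of $T'$ --- so the argument you would be generalizing needs repair even for $i=3$. None of this, however, affects your main conclusion: the counterexample is independent of whether your closed form is fully proved, so the statement as printed admits no correct proof.
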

	A connected graph $G$ is a {\em cactus} if any edge of $G$ belongs to at most one cycle.
	\begin{theo}
	Let $G$ be a cactus with order $n\geq 3$. Let $r$ be the number of cycles of $G$ containing a vertex of degree two, $l=|\{v\in V(G): d_G(v)=1\}|$, and $n_2=|\{v\in V(G): d_G(v)=2\}|$. Then 
	    $$t_3(G)=2n-2l-2r+2-n_2.$$
	\label{Theorem 2.3}
	\end{theo}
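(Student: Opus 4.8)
The plan is to argue by induction on the order $n$, extending the inductive scheme of Theorem~\ref{Lemma 2.1} for trees by one additional reduction that peels off a whole cycle. For the base one checks the two cacti of order $3$, namely $P_3$ and $C_3$, directly, and one records the $2$-connected case separately: a single cycle $C_n$ has every vertex of degree $2$ and no active constraint, so $t_3(C_n)=n$, which agrees with $2n-2l-2r+2-n_2=2n-2+2-n=n$. For the inductive step I would look at the block--cut structure of $G$ and select a leaf block $B$, meeting the rest of $G$ in a single cut vertex $w_0$. Because $G$ is a cactus, $B$ is either a pendant edge ending in a leaf, or a cycle $C=w_0w_1\cdots w_{m-1}w_0$ whose vertices $w_1,\dots,w_{m-1}$ all have degree $2$.

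If $B$ is a pendant edge $w_0u$, I would delete the leaf $u$ and apply the induction hypothesis to $G'=G\setminus\{u\}$; deleting a leaf destroys no cycle, so $r'=r$, and the analysis is word for word Cases 1--3 of Theorem~\ref{Lemma 2.1}, according as $w_0$ becomes a leaf ($d_G(w_0)=2$), drops to degree $2$ ($d_G(w_0)=3$), or keeps degree $\geq 3$ ($d_G(w_0)\geq 4$). In the first two cases $t_3(G)=t_3(G')+1$ and in the third $t_3(G)=t_3(G')$, and in each the expression $2n-2l-2r+2-n_2$ changes by precisely the same amount because $r$ is untouched.

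The new step is a pendant cycle $B=C$. Here I would set $G'=G\setminus\{w_1,\dots,w_{m-1}\}$, which is again a connected cactus with one fewer cycle, so $n'=n-(m-1)$ and $r'=r-1$, while $l'$ and $n_2'$ are governed by $d_G(w_0)$: for $d_G(w_0)=3$ the vertex $w_0$ becomes a leaf, for $d_G(w_0)=4$ it becomes a degree-$2$ vertex, and for $d_G(w_0)\geq 5$ it remains of degree $\geq 3$. In each subcase I would recompute $(l',n_2')$, obtain the target increment of $2n-2l-2r+2-n_2$, and match it to $t_3$ by two estimates. For the lower bound I extend an optimal coloring of $G'$: the interior vertices $w_2,\dots,w_{m-2}$ have degree $2$ and may receive pairwise new colors freely, while the two neighbors $w_1,w_{m-1}$ of $w_0$ are colored so as to respect the three-color budget of $N_G(w_0)$. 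For the upper bound I restrict an optimal coloring of $G$ to $G'$; since each $N_{G'}(v)\subseteq N_G(v)$, this stays an $N_3$-coloring, giving the crude bound $t_3(G)\le t_3(G')+(m-1)$, which I then sharpen using the constraint at $w_0$.

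The principal obstacle, on which I expect to spend the most effort, is exactly this cut-vertex accounting. The number of colors a pendant cycle can genuinely add is not $m-1$ but is capped by the three-color constraint at $w_0$: if the neighbors of $w_0$ outside $C$ already carry $c$ colors in an optimal coloring of $G'$, then $w_1,w_{m-1}$ can introduce at most $3-c$ further colors, and $c$ is in turn controlled by $d_G(w_0)$. The heart of the proof is to show that the degree of $w_0$ fixes simultaneously this color gain and the change in $(l,n_2,r)$, so that $t_3$ and $2n-2l-2r+2-n_2$ move in lockstep; this is also the point at which one must be careful about how a cycle passing through $w_0$ is counted once the degree of $w_0$ falls, since that is what decides whether $r$ really drops by exactly one.
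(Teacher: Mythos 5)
Your induction cannot be completed, and the obstruction is not the cut-vertex accounting you flagged but the pendant-edge case you regard as settled. The subcase in which the support vertex keeps degree at least $3$ --- where you conclude, word for word as in Case 3 of Theorem~\ref{Lemma 2.1}, that $t_3(G)=t_3(G')$ --- is false for cacti. Take $G$ to be the triangle $v_1v_2v_3$ with two pendant leaves attached to each $v_i$, so $n=9$, $l=6$, $n_2=0$, and each $v_i$ has degree $4$. Giving $v_1,v_2,v_3$ one common color and the six leaves six further distinct colors is an $N_3$-vertex coloring (each $N_G(v_i)$ consists of two cycle vertices carrying the common color plus two leaves, hence exactly $3$ colors), so $t_3(G)\geq 7$; and a short case analysis on the number of colors used on the triangle (if $3$, each leaf pair adds at most $1$ new color, total $\leq 6$; if $2$, say $f(v_1)=f(v_2)$, the pair at $v_3$ adds at most $2$ and the pairs at $v_1,v_2$ at most $1$ each, total $\leq 6$; if $1$, total $\leq 1+2+2+2=7$) shows $t_3(G)=7$. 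Every leaf block of this $G$ is a pendant edge attached at a vertex of degree $4$, so your induction must pass through the contested subcase; yet deleting one leaf of $v_1$ leaves a cactus $G'$ with $t_3(G')=6$ (same case analysis, $v_1$ now having a single leaf), so in fact $t_3(G)=t_3(G')+1$. The tree argument does not transfer because the two cycle-neighbors $v_2,v_3$ of $v_1$ are adjacent to each other and occur in all three constraint sets $N_G(v_i)$: an optimal coloring can profitably leave $N_{G'}(v_1)$ with fewer than three colors, and then a new pendant leaf really does bring a brand-new color.

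Moreover, no repair of the bookkeeping can close this gap, because the statement itself fails on this graph: under the stated definition of $r$ the unique cycle contains no degree-two vertex, so $r=0$ and the formula predicts $2n-2l-2r+2-n_2=8$, while reading $r$ as the total number of cycles (which is what the paper's own induction hypothesis actually uses) predicts $6$; the true value is $7$. The paper's proof founders at exactly the same point: its Subcase 1.3 asserts the same equality $t_3(G)=t_3(G')$ whenever $d_{G'}(w)\geq 3$. Your secondary worry is also justified --- with $r$ as literally defined, removing a pendant cycle at a cut vertex $w_0$ of degree $4$ can leave $r$ unchanged rather than lower it, since the other cycle through $w_0$ may acquire its first degree-two vertex, namely $w_0$ itself --- and your decomposition into pendant edges and pendant cycles is a cleaner inductive scheme than the paper's deletion of a minimum-degree vertex; but neither virtue matters until the pendant-edge subcase, and with it the theorem, is corrected.
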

	\begin{proof}
	    We prove the theorem by induction on $n$. The result clearly holds when $n\leq 3$. Assume that the equality $t_3(G)=2n-2l-2r+2-n_2$ holds for all cactus of order less than $n$. Let $G^{\prime}=G\backslash \{u\}$ be a cactus with order $|V(G^{\prime})|=n-1$, where $deg_G(u)=\delta(G)$. By induction hypothesis, we have $t_3(G^{\prime})=2n^{\prime}-2l^{\prime}-2r^{\prime}+2-n_2^{\prime}$, where $n^{\prime}$ is the order of $G^{\prime}$, $r^{\prime}$ is the number of cycles of $G^{\prime}$, $l^{\prime}=|\{v\in V(G^{\prime}): d_{G^{\prime}}(v)=1\}|$ and $n_2^{\prime}=|\{v\in V(G^{\prime}): d_{G^{\prime}}(v)=2\}|$.
	    
	    There are three cases when $G$ is a cactus.
	    \begin{enumerate}[$\left(\rm i\right)$]
	        \item The general cactus contain both cycles and vertices of degree 1.
	        \item If there is no vertex in $G$ with degree 1, then each vertex in $G$ belongs to at least one cycle.
	        \item If there is no cycle in $G$, then $G$ is a tree.
	    \end{enumerate} We have proved the last case in Theorem~\ref{Lemma 2.1}, so in this theorem we only prove the first two cases, that is,
	    
	    {\em Case 1}: $d_G(u)=1$ and there is a vertex $w$ which is adjacent to $u$. This case can be divided into three subcases:
	    
        {\em Subcase 1.1}: $d_{G^{\prime}}(w)=1$. Now $n^{\prime}=n-1$, $l^{\prime}=l$, $r^{\prime}=r$ and $n_2^{\prime}=n_2-1$. We color $u$ with a new color which is distinct with the colors of $G^{\prime}$, it implies that $t_3(G)\geq t_3(G^{\prime})+1$. According to the argument of Theorem~\ref{Lemma 2.1}, we know that $t_3(G)=t_3(G^{\prime})+1$. Hence, $t_3(G)=2(n-1)-2l-2r+2-(n_2-1)+1=2n-2l-2r+2-n_2$.
        
        {\em Subcase 1.2}: $d_{G^{\prime}}(w)=2$ (note that $w$ may be a vertex in a cycle). Now $n^{\prime}=n-1$, $l^{\prime}=l-1$, $r^{\prime}=r$ and $n_2^{\prime}=n_2+1$. We color $u$ with a new color which is distinct with the colors of $G^{\prime}$, it implies that $t_3(G)\geq t_3(G^{\prime})+1$. According to the argument of Theorem~\ref{Lemma 2.1}, $t_3(G)=t_3(G^{\prime})+1$. Hence, $t_3(G)=2(n-1)-2(l-1)-2r+2-(n_2+1)+1=2n-2l-2r+2-n_2$.

        {\em Subcase 1.3}: $d_{G^{\prime}}(w)\geq 3$ (note that $w$ may be a vertex in a cycle). Now $n^{\prime}=n-1$, $l^{\prime}=l-1$, $r^{\prime}=r$ and $n_2^{\prime}=n_2$. The color of $u$ is the same as one of the colors of $N_{G^{\prime}}(w)$, it implies that $t_3(G)=t_3(G^{\prime})$. Hence, we have $t_3(G)=2(n-1)-2(l-1)-2r+2-n_2=2n-2l-2r+2-n_2$.
        
        {\em Case 2}: $d_G(u)=2$. In this case, each vertex in $G$ belongs to at least one cycle. This case is divided into three subcases:
        
        {\em Subcase 2.1}: $d_G(u)=2$ and the degree of each vertex in $N_G(u)$ is 2. Now $n^{\prime}=n-1$, $l^{\prime}=l+2$, $r^{\prime}=r-1$ and $n_2^{\prime}=n_2-3$. We color $u$ with a new color which is distinct with the colors of $G^{\prime}$, it implies that $t_3(G)\geq t_3(G^{\prime})+1$. According to the argument of Theorem~\ref{Lemma 2.1}, $t_3(G)=t_3(G^{\prime})+1$. Hence, $t_3(G)=2(n-1)-2(l+2)-2(r-1)+2-(n_2-3)+1=2n-2l-2r+2-n_2$.
        
        {\em Subcase 2.2}: $d_G(u)=2$ and there exists a vertex in $N_G(u)$ with degree 3. Now $n^{\prime}=n-1$, $l^{\prime}=l+1$, $r^{\prime}=r-1$ and $n_2^{\prime}=n_2-1$. We color $u$ with a new color which is distinct with the colors of $G^{\prime}$, it implies that $t_3(G)\geq t_3(G^{\prime})+1$. According to the argument of Theorem~\ref{Lemma 2.1}, $t_3(G)=t_3(G^{\prime})+1$. Hence, $t_3(G)=2(n-1)-2(l+1)-2(r-1)+2-(n_2-1)+1=2n-2l-2r+2-n_2$.
        
        {\em Subcase 2.3}: $d_G(u)=2$ and there exists a vertex in $N_G(u)$ with degree at least 4. Now $n^{\prime}=n-1$, $l^{\prime}=l+1$, $r^{\prime}=r-1$ and $n_2^{\prime}=n_2-2$. The color of $u$ is the same as one of the colors of $N_{G^{\prime}}(w)$, it implies that $t_3(G)=t_3(G^{\prime})$. Hence, we have $t_3(G)=2(n-1)-2(l+1)-2(r-1)+2-(n_2-2)=2n-2l-2r+2-n_2$.
	\end{proof}
	

\end{document}